\theoremstyle{plain}
\newtheorem*{theorem-non}{Theorem}
\newtheorem*{corollary-non}{Corollary}
\newtheorem{theorem}{Theorem}[section]
\newtheorem{lemma}[theorem]{Lemma}
\newtheorem{proposition}[theorem]{Proposition}
\newtheorem{algorithm}[theorem]{Algorithm}
\theoremstyle{definition}
\newtheorem{definition}[theorem]{Definition}
\newtheorem{example}[theorem]{Example}
\theoremstyle{remark}
\newtheorem{remark}[theorem]{Remark}
\numberwithin{figure}{section}
\newcommand{\Z}{\mathbb{Z}}                
\newcommand{\mx}[1]{\mathbf{#1}}           
\renewcommand{\emptyset}{\varnothing}
\begin{document}

\title{Splines Over Integer Quotient Rings}
\author{McCleary Philbin, Lindsay Swift, Alison Tammaro, \\
Danielle Williams }

\maketitle

\begin{abstract}
Given a graph with edges labeled by elements in $\Z/m\Z$, a generalized spline is a labeling of each vertex by an integer $\mod m$ such that the labels of adjacent vertices agree modulo the label associated to the edge connecting them. These generalize the classical splines that arise in analysis as well as in a construction of equivariant cohomology often referred to as GKM-theory.  
We give an algorithm to produce minimum generating sets for the $\Z$-module of splines on connected graphs over $\Z/m \Z$. As an application, we give a quick heuristic to determine the minimum number of generators of the module of splines over $\Z/m \Z$. We also completely determine the ring of splines over $\Z/p^k\Z$ by providing explicit multiplication tables with respect to the elements of our minimum generating set. Our final result extends some of these results to splines over $\Z$.
\end{abstract}


\section{Introduction}

We consider an extension of classical splines that was introduced by Gilbert, Tymoczko, and Viel \cite{GilbertPolsterTymoczko}.  Classical splines can be thought of as piecewise polynomials over a polyhedral complex that agree up to a specified degree of smoothness at the intersection of faces.  By contrast, generalized splines are defined as follows.  Fix a ring $R$, a graph $G=(V,E)$, and a map $\alpha: E \to \{\text{ideals of } R\}$ called an edge-labeling function. A spline on the edge-labeled graph $(G,\alpha)$ is a vertex-labeling $\mx{f} \in R^{|V|}$ so that for each edge $uv$ the difference $\mx{f}_u-\mx{f}_v$ is in the ideal $\alpha(uv)$.  When $R$ is a polynomial ring and $G$ is the dual graph to a polyhedral complex, we recover the classical case; this dual setting is how splines arise in algebraic topology, especially when computing equivariant cohomology via what is called GKM-theory.

Two central problems about splines in the classical setting are: 1) compute the dimension and 2) find an explicit basis of the complex vector space of splines (see the book of Lai-Schumaker for a survey \cite{Lai-Schumaker}).  Gilbert-Tymoczko-Viel showed that when the ring $R$ is a domain, the dimension is simply the number of vertices in the graph \cite[5.2]{GilbertPolsterTymoczko}. (This corresponds to the case of $\mathcal{C}^\infty$-splines, for which the coefficient ring $R$ is a polynomial ring.)  Bowden-Tymoczko showed that this is not true when the ring $R$ has zero-divisors, by contrast, and in fact gave examples when the spline space achieves each dimension between $2$ and $|V|$ \cite[4.2]{BT}. (This corresponds to the case of splines with bounded degree $d$, for which the coefficient ring $R$ can be taken to be a quotient of a polynomial ring by the ideal generated by all monomials of degree $d+1$.)

In this paper we complete Bowden-Tymoczko's analysis of splines over the ring $\Z/m\Z$.  They were only able to describe splines over cycles; we give an explicit basis for splines on arbitrary graphs over $\Z/m\Z$ as well as an easy heuristic to compute the dimension of splines for any graph over $\Z/m\Z$.  

In order to describe our results in more detail, we need notation.  The ring $\Z/m\Z$ is a PID so we can think of the edge-labeling function $\alpha$ as taking images in $\Z/m\Z$.  Indeed, we often treat the edge-labeling function as a map $\alpha: E \rightarrow \Z$ and implicitly identify the images with the ideals $\Z \alpha(uv)+m\Z/m\Z$.  Given a graph $G$ and an edge-labeling function $\alpha: E \rightarrow \Z$ we write $\mathbb{B}_m$ for a minimum generating set of $(G,\alpha)$ with respect to the ring $\Z/m\Z$.

Theorem \ref{rank theorem} computes the rank of the space of splines over $\Z/m\Z$ for arbitrary $m$, which we paraphrase below.

\begin{theorem-non} 
Let $G$ be a graph and $\alpha: E \rightarrow \Z$ an edge-labeling function.  The number of elements in $\mathbb{B}_{p^e}$ is the same as the number of connected components of the graph obtained from $G$ by removing all edges except those for which $\alpha(uv) \in p^e \Z$.  Given a positive integer $m$ let $m=p_1^{e_1}p_2^{e_2}\cdots p_t^{e_t}$ be the primary decomposition of $m$.  The number of elements in $\mathbb{B}_m$ is the maximum of the number of elements in $\mathbb{B}_{p_i^{e_i}}$ over $i=1,\ldots,t$.
\end{theorem-non}

The components in the previous theorem partition the vertices so that two vertices are in the same path if they are connected by edges labeled $0$ when the edge-labeling function $\alpha$ is reduced mod $m$.  We refer to the parts of this partition as {\em zero-connected components (mod $m$),} though we omit the modulus if it is clear in context.

\begin{example}
We find the size of a minimum generating set over $\Z/10\Z$ for splines on the graph in Figure \ref{graph mod 10} by examining the number of zero-connected components of the graph considered mod $2$ in Figure \ref{graph mod 2} and mod $5$ in Figure \ref{graph mod 5}. There are four zero-connected components in Figure \ref{graph mod 5} and two in Figure \ref{graph mod 2}, so there are four and two elements in a minimum generating set for splines on this graph over $\Z/5\Z$ and $\Z/2\Z$ respectively. Therefore there are four elements in a minimum generating set for splines on the graph in Figure \ref{graph mod 10} over $\Z/10\Z$. 

\begin{figure}[h!]

\begin{minipage}[l]{.32\linewidth}
\begin{center}
\begin{tikzpicture}[shorten >=1pt,auto,node distance=1.5cm,
  thick,main node/.style={rectangle,draw,font=\sffamily\bfseries}]
  
 \node[main node] (1) {$v_1$};
 \node[main node] (2) [above right of = 1] {$v_2$};
 \node[main node] (3) [above of = 2] {$v_3$};
 \node[main node] (5) [above left of =1] {$v_5$};
  \node[main node] (4) [above of =5] {$v_4$};

  \path[every node/.style={color=black, font=\sffamily\small}]
    
    (2) edge node {\color{red}$2$} (1)
    	edge node {\color{red}$2$} (5)
    (1) edge node {\color{red}$1$} (5)
    (4) edge node {\color{red}$2$} (3)
    	edge node {\color{red}$1$} (2)
    (3) edge node {\color{red}$1$} (2)
    (5) edge node {\color{red}$5$} (4);

\end{tikzpicture}
\caption{ Graph over $\Z/10\Z$}\label{graph mod 10}
\end{center}
\end{minipage}
\begin{minipage}[l]{0.32\linewidth}
\begin{center}
\begin{tikzpicture}[shorten >=1pt,auto,node distance=1.5cm,
  thick,main node/.style={rectangle,draw,font=\sffamily\bfseries}]
  
 \node[main node] (1) {$v_1$};
 \node[main node] (2) [above right of = 1] {$v_2$};
 \node[main node] (3) [above of = 2] {$v_3$};
 \node[main node] (5) [above left of =1] {$v_5$};
  \node[main node] (4) [above of =5] {$v_4$};

  \path[every node/.style={color=black, font=\sffamily\small}]
    
    (2) edge node {\color{red}$0$} (1)
    	edge node {\color{red}$0$} (5)
    (1) edge node {\color{red}$1$} (5)
    (4) edge node {\color{red}$0$} (3)
    	edge node {\color{red}$1$} (2)
    (3) edge node {\color{red}$1$} (2)
    (5) edge node {\color{red}$1$} (4);

\end{tikzpicture}
\caption{ Graph over $\Z/2\Z$}\label{graph mod 2}
\end{center}
\end{minipage}
\begin{minipage}[l]{0.32\linewidth}
\begin{center}
\begin{tikzpicture}[shorten >=1pt,auto,node distance=1.5cm,
  thick,main node/.style={rectangle,draw,font=\sffamily\bfseries}]
  
 \node[main node] (1) {$v_1$};
 \node[main node] (2) [above right of = 1] {$v_2$};
 \node[main node] (3) [above of = 2] {$v_3$};
 \node[main node] (5) [above left of =1] {$v_5$};
  \node[main node] (4) [above of =5] {$v_4$};

  \path[every node/.style={color=black, font=\sffamily\small}]
    
    (2) edge node {\color{red}$1$} (1)
    	edge node {\color{red}$1$} (5)
    (1) edge node {\color{red}$1$} (5)
    (4) edge node {\color{red}$1$} (3)
    	edge node {\color{red}$1$} (2)
    (3) edge node {\color{red}$1$} (2)
    (5) edge node {\color{red}$0$} (4);

\end{tikzpicture}
\caption{Graph over $\Z/5\Z$}\label{graph mod 5}
\end{center}
\end{minipage}
\end{figure}
\end{example}

In fact, over $\Z/p^e\Z$ we can construct a basis explicitly, essentially by using the zero-connected components as the support of certain splines.  More formally we have the following.

\begin{theorem-non}
Fix an edge-labeled graph $(G,\alpha)$ and a prime power $p^e$.  There is a minimum generating set $\mathbb{B}_{p^e}$ for the splines on $(G,\alpha)$ over $\Z/p^e\Z$ that satisfies the following properties:
\begin{itemize}
\item The support of each spline $\mx{b}^i \in \mathbb{B}_{p^e}$ is a zero-connected component mod $p^k$ for some $k$ with $0 \leq k \leq e$.
\item If $k$ is the minimum integer such that the support of $\mx{b}^i \in \mathbb{B}_{p^e}$ is a zero-connected component mod $p^k$ then $\mx{b}^i_u = p^k$ for all $u$ in that zero-connected component and $\mx{b}^i_u = 0$ for all other $u$.
\end{itemize}
\end{theorem-non}

Algorithm \ref{mod p^k algorithm} constructs this minimum generating set explicitly. Moreover Lemma \ref{support lemma} shows that the supports of each pair of splines $\mx{b}^i$ and $\mx{b}^j$ in $\mathbb{B}_{p^e}$ are either disjoint or nested, via analogous properties for zero-connected components modulo different prime powers.

Using appropriate pullback homomorphisms, we can generalize these results and give a heuristic to find minimum generating sets $\mathbb{B}$ for splines over $\Z/m\Z$.  Results of Bowden and Tymoczko \cite{BT} and of Bowden, Hagen, King, and Reinders \cite{BowdenHKR} also play key roles in that proof.  Theorem \ref{integer theorem} then describes how to choose an appropriate modulus $m$ in order to extend these results to splines over $\Z$.

Finally, we use the explicit description of the minimum generating set mod $p^e$ to explicitly compute the structure constants in the ring of splines.  This is a central question about splines in the context of equivariant cohomology.  The following summarizes Theorem \ref{prime power mult}.

\begin{theorem-non}
Suppose that $\mathbb{B}_{p^e}$ is the minimum generating set constructed in Algorithm \ref{mod p^k algorithm}.  Let $\mx{b^{(i)}}$ be an element of $\mathbb{B}_{p^e}$ with non-zero entries $p^s$ and $\mx{b^{(j)}}$ be an element of $\mathbb{B}_{p^e}$ with non-zero entries $p^r$ where $s\leq r$.  Then we have
\[
\mx{b^{(i)}}\mx{b^{(j)}} =
\begin{cases}
p^s\mx{b^{(j)}}&  \text{if their supports are nested and  } s+r < k\\ 
\mx{0} & \text{otherwise} \\
\end{cases}
\] 
\end{theorem-non}

While we use the rings $R=\Z$, $R=\Z/m\Z$ and $R= \Z/p^k\Z$ to compute splines, the results of this paper hold for splines over any principal ideal domain. In general, to translate the results and proofs one need only change $\Z$ to a general PID $R$ and $\Z/m\Z$ to $R/I$ where $I$ is an ideal of $R$, and $\Z/p^k\Z$ to $R/P^k$ where $P$ is a prime ideal of $R$. 

We wonder whether the techniques in this paper be adapted to polynomial splines of bounded degree, even perhaps with particular restrictions on edge-labels, such as arises in applications.

\section{Background and Notation}\label{background}

In this section we present an overview of important definitions following the notation of \cite{BT}. Our convention throughout this document is that $p$ is prime, $m$ is an integer, and $|V|$ is the number of vertices of a graph.
 
\begin{definition}
Let $G = (V,E)$ be a finite connected graph. Let $R$ be a commutative ring with identity. Let $\alpha: E \to \{\text{ideals in  } R\}$ be a function that labels the edges of $G$ with ideals in $R$. A spline on $G$ is a vertex-labeling $\mx{f} \in R^{|V|}$ such that for each edge $v_iv_j \in E$ we have

\[\mx{f}_{v_i} - \mx{f}_{v_j} \in  \alpha(v_iv_j)  \]

\noindent
where $\mx{f}_{v_i}$ denotes the label of vertex $v_i$. The collection of splines over the graph $G$ with edge-labeling $\alpha$ is denoted $R_{G,\alpha}$. 
\end{definition}

\begin{remark}\label{edge convention}
In this paper the base ring is either $R = \Z/m\Z$ or $R = \Z$. Since both $\Z/m\Z$ and $\Z$ are principal ideal domains, we typically denote the edge label $\langle g \rangle$ by a generator $g \in \Z/m\Z$. In fact if $g \in \Z$ is the minimal positive element of its coset $g + m\Z$ we typically denote the edge label by the integer $g$.  
\end{remark}

\begin{remark}
Suppose $R = \Z/m\Z$ or $R = \Z$.  Then a spline on an edge-labeled graph $(G,\alpha)$ is an element $\mx{f}$ in $(\Z/m\Z)^{|V|}$ or $\Z^{|V|}$ respectively such that for each edge $v_iv_j \in E$ we have

\[\mx{f}_{v_i} - \mx{f}_{v_j} \equiv 0 \mod \alpha(v_iv_j)\]
\end{remark}

\begin{example}
Figure \ref{spline example} depicts an edge-labeled graph with the corresponding relations on the vertex labels.
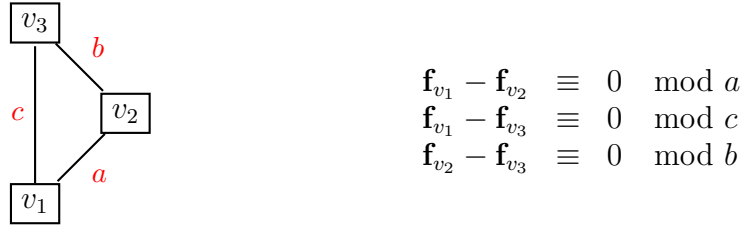
\begin{figure}[h!]

\begin{minipage}[c]{0.3\linewidth}

\begin{tikzpicture}[shorten >=1pt,auto,node distance=1.7cm,
  thick,main node/.style={rectangle,draw,font=\sffamily\bfseries}]
  
 \node[main node] (1) {$v_1$};
 \node[main node] (2) [above right of = 1] {$v_2$};
 \node[main node] (3) [above left of = 2] {$v_3$};

  \path[every node/.style={color=black, font=\sffamily\small}]
    
    (2) edge node {\color{red}$a$} (1)
    (1) edge node {\color{red}$c$} (3)
    (3) edge node {\color{red}$b$} (2);

\end{tikzpicture}
\end{minipage}
\hspace{2mm}
\begin{minipage}[c]{0.2\linewidth}

\[\begin{array}{l l l}
\mx{f}_{v_1}-\mx{f}_{v_2} & \equiv & 0 \mod a\\
\mx{f}_{v_1}-\mx{f}_{v_3} & \equiv & 0 \mod c\\
\mx{f}_{v_2}-\mx{f}_{v_3} & \equiv & 0 \mod b\\
\end{array}\]

\end{minipage}
\caption{Edge-Labeled Graph and Relations on Vertices} \label{spline example}
\end{figure}

We represent splines using standard vector notation. In Figure \ref{spline example}, we have $|V|=3$ and

\[\mx{f}^T = \left( \mx{f}_{v_3} \,, \mx{f}_{v_2} \,, \mx{f}_{v_1}\right)^T \]

\end{example}

\begin{remark}\label{zero_edge_same_label_}
When an edge $e=v_iv_j$ is labeled with a unit in the base ring $R$ there is no restriction on the labels of the vertices of that edge since $x -y \in \langle 1 \rangle$ holds for all $x,y \in R$. If an edge $e = v_iv_j$ is labeled zero then for every spline $\mx{f}$ we must have $\mx{f}_{v_i} = \mx{f}_{v_j}$ since $x-y \equiv 0 \mod 0$ implies $x=y$.
\end{remark}

In this paper we are interested in particular types of splines called \emph{flow-up} splines. A collection of flow-up splines generalizes the concept of a triangular basis from linear algebra. To establish a ``flow" we order the vertices. We adopt the terminology ``flow-up" from certain geometric contexts in which the elements are defined by torus flows \cite{JT}.

\begin{definition}
Given a graph $G$ with an ordered set of vertices $\{v_1, v_2, \dots, v_n\}$ a flow-up spline for a vertex $v_i$ is a spline $\mx{f^{(i)}}$ for which $\mx{f^{(i)}}_{v_t} = 0$ whenever $t<i$. 
\end{definition}

In some cases the entries of a flow-up spline have at most one non-zero value. 

\begin{definition}
A constant flow-up spline in $R_{G,\alpha}$ is a flow-up spline $\mx{f^{(i)}}$ with 
$\mx{f^{(i)}}_{v_t}= r \in R$ for each vertex $v_t \in V$ and $t\geq i$.
\end{definition}

In later sections we compare splines on the same graph over different quotient rings using the standard quotient map $\rho: R \to R/ I $. We apply the quotient map to the edge-labeling function $\alpha$ to get the edge-labeling function 
\[\alpha_I: E \to \text{ ideals of }  R/ I \] 
defined by $\alpha_I(uv)=\rho(\alpha(uv))$ and apply the induced quotient map $\rho^*:R_{G,\alpha} \to R_{G,\alpha_I}$ to the ring of splines. 

\begin{example} Figure \ref{mod 6 graph} and Figure \ref{mod 3 graph} show the quotient map $\rho_{3}: \Z/6\Z \to \Z/3\Z$ for a particular graph $G$.

\begin{figure}[h!]
    \centering
\begin{minipage}[c]{0.45\linewidth}

\begin{tikzpicture}[shorten >=1pt,auto,node distance=1.7cm,
  thick,main node/.style={rectangle,draw,font=\sffamily\bfseries}]
  
 \node[main node] (1) {$v_1$};
 \node[main node] (2) [above right of = 1] {$v_2$};
 \node[main node] (3) [above left of = 2] {$v_3$};

  \path[every node/.style={color=black, font=\sffamily\small}]
    
    (2) edge node {\color{red}$2$} (1)
    (1) edge node {\color{red}$2$} (3)
    (3) edge node {\color{red}$3$} (2);

\end{tikzpicture}
\centering
\caption{Graph over $\Z/6\Z$}\label{mod 6 graph}
\end{minipage}
    \quad
\begin{minipage}[c]{0.45\linewidth}

\begin{tikzpicture}[shorten >=1pt,auto,node distance=1.7cm,
  thick,main node/.style={rectangle,draw,font=\sffamily\bfseries}]
  
 \node[main node] (1) {$v_1$};
 \node[main node] (2) [above right of = 1] {$v_2$};
 \node[main node] (3) [above left of = 2] {$v_3$};

  \path[every node/.style={color=black, font=\sffamily\small}]
    
    (2) edge node {\color{red}$1$} (1)
    (1) edge node {\color{red}$1$} (3)
    (3) edge node {\color{red}$0$} (2);

\end{tikzpicture}
\centering
\caption{Graph over $\Z/3\Z$}\label{mod 3 graph}
\end{minipage}
\end{figure}

Note that  $\rho_3  (\langle2\rangle) = \langle1\rangle$ since $\langle 2 \rangle = \langle 1 \rangle$ in $\Z/3\Z$. Therefore we label the corresponding edges $1$ following the convention stated in Remark \ref{edge convention}.

\end{example}

\begin{definition}
We use the terminology \emph{minimum generating set} for a spanning set of splines with the smallest possible number of elements. These sets function like bases except there might be torsion, namely non-zero scalars $c_b$ such that $c_b\mx{b}=\mx{0}$. 
\end{definition}

\section{Minimum generating sets} \label{section algorithm mod m}

In this section we present an algorithm to produce flow-up minimum generating sets for splines with $\Z/p^k\Z$, $\Z/m\Z\,$ and $\Z$ as base rings. We begin with a definition and discussion of zero-connected components, then outline the steps for the algorithm that produces flow-up minimum generating sets for splines over $\Z/p^k\Z$. We then give an overview of some previous work done by Bowden and Tymoczko \cite{BT} and use it to pull back to minimum generating sets over $\Z/m\Z$ from the primary decomposition $\bigoplus_{i=1}^k \Z/p_i^{e_i}\Z$. Finally we extend these results to give an algorithm to produce minimum generating sets for splines over the integers.

\subsection{An algorithm to produce minimum generating sets modulo $p^k$}\label{mod p^k sect}
To produce flow-up minimum generating sets for splines over $\Z/p^k\Z$, we start with generators for splines over $\Z/p^{\beta}\Z$ for $1\leq \beta \leq k$, pull back to splines over $\Z/p^{\beta+1}\Z$, and add in the missing generators. We give a method for identifying splines over $\Z/p^{\beta}\Z$ with splines over $\Z/p^{\beta+1}\Z$ and give an explicit formula for the additional generators at each step. 

We use the following criteria given by Bowden and Tymoczko \cite{BT} to identify when a generating set is minimum.

\begin{theorem}[Bowden-Tymoczko]\label{minimum gen set crit}
Suppose that $\{\mx{b^{(i_1)}}, \mx{b^{(i_2)}}, \dots, \mx{b^{(i_t)}}\}$ is a set of flow-up generators for the ring of splines $R_{G,\alpha}$ satisfying the following properties:

\begin{itemize}
\item The spline $\mx{b^{(i_1)}} = \mx{b^{(1)}}$ the spline with no leading zeros
\item The splines $\{\mx{b^{(i_s)}} \,\, : \,\, s = 1,2, \dots, t \}$ are constant flow-up splines with $\mx{b^{(i_s)}}_v \in \{0,a_s\}$ for each $v\in V$ and each $s$. 
\item The set $\{ 1=a_1, a_2, \dots, a_t\}$ can be reordered so that $a_{j_1} \, \vline \,a_{j_2}\, \vline \, a_{j_3} \,\vline\, \dots \,\vline \,a_{j_t}$.
\end{itemize}

Then $\{\mx{b^{(i_1)}}, \mx{b^{(i_2)}}, \dots, \mx{b^{(i_t)}}\}$ forms a flow-up minimum generating set for $R_{G,\alpha}$.

\end{theorem}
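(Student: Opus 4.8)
The hypotheses already supply a set of flow-up splines that spans $R_{G,\alpha}$, so the only thing to prove is minimality: that no spanning set of $R_{G,\alpha}$ has fewer than $t$ elements, i.e. that the minimal number of generators of the module $R_{G,\alpha}$ is $t$. I will argue for $R=\Z/m\Z$; the argument applies verbatim to any PID quotient $R/I$, and for $R=\Z$ it degenerates into the classical statement that $R_{G,\alpha}$ is free of rank $|V|$. The plan is to show that $R_{G,\alpha}$ is the internal direct sum of the cyclic submodules $R\,\mx{b^{(i_s)}}$, and that this decomposition is already in invariant-factor form, so that the structure theorem for finitely generated abelian groups reads off $t$ as the minimal number of generators.

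First I would use that the generators are indexed by their pairwise distinct leading positions, so after relabeling $i_1=1<i_2<\cdots<i_t$, the $v_{i_s}$-coordinate of $\mx{b^{(i_s)}}$ equals $a_s$ and every coordinate of $\mx{b^{(i_s)}}$ lies in $\{0,a_s\}$. Directness then follows by triangular peeling: given a relation $\sum_s c_s\,\mx{b^{(i_s)}}=\mx{0}$, evaluating at $v_{i_1}=v_1$ kills all terms but the first and forces $c_1=0$; since $\mx{b^{(i_s)}}$ takes only the values $0$ and $a_s$, the congruence $c_sa_s\equiv 0\pmod m$ is equivalent to $c_s\,\mx{b^{(i_s)}}=\mx{0}$, so after discarding the $s=1$ term we may evaluate at $v_{i_2}$ and repeat. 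Iterating gives $c_s\,\mx{b^{(i_s)}}=\mx{0}$ for every $s$, hence $R_{G,\alpha}=\bigoplus_{s=1}^{t}R\,\mx{b^{(i_s)}}$ with $R\,\mx{b^{(i_s)}}\cong R/\mathrm{Ann}_R(a_s)=R/\langle m/\gcd(m,a_s)\rangle$. (Equivalently one can count: distinct leading positions give $|R_{G,\alpha}|\ge\prod_s|\langle a_s\rangle|$ via the flow-up filtration, while the spanning surjection $\bigoplus_s R/\mathrm{Ann}_R(a_s)\twoheadrightarrow R_{G,\alpha}$ gives the reverse inequality, so it is an isomorphism.)

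Finally I would feed in the divisibility hypothesis. Since $\mx{b^{(i_s)}}$ has the nonzero entry $a_s$ we get $a_s\not\equiv 0\pmod m$, so each summand has order $m/\gcd(m,a_s)\ge 2$ (for the $s$ with $a_s=1$ it is all of $\Z/m\Z$). Reordering the $a_s$ into a chain $a_{j_1}\mid\cdots\mid a_{j_t}$ with $a_{j_1}=1$ gives $1=\gcd(m,a_{j_1})\mid\cdots\mid\gcd(m,a_{j_t})$, hence the summand orders satisfy $m/\gcd(m,a_{j_t})\mid\cdots\mid m/\gcd(m,a_{j_1})=m$. Thus $R_{G,\alpha}$ is, up to reordering, a direct sum of $t$ cyclic groups of orders $\ge 2$ forming a divisibility chain, that is, its invariant-factor decomposition has exactly $t$ terms; by uniqueness of that decomposition the minimal number of generators of $R_{G,\alpha}$ is $t$. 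Since the given set is a spanning set of $t$ flow-up splines, it is a flow-up minimum generating set.

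The computations are routine, and the genuine content is entirely in the two structural inputs. The one point I would be careful about is that the first bullet of the hypotheses is really doing its job: the $\mx{b^{(i_s)}}$ must have pairwise distinct leading positions (and in particular none is the zero spline), since that is exactly what validates the peeling/counting step and hence turns the spanning set into an honest direct-sum decomposition. The third bullet is then used precisely once more, in the last step: without the divisibility chain the count $t$ would not be forced (for instance $\Z/2\Z\oplus\Z/3\Z$ is cyclic), and it is this hypothesis that puts the decomposition into invariant-factor form.
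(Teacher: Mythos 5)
The paper offers no proof of this statement to compare yours against: Theorem \ref{minimum gen set crit} is imported verbatim from Bowden--Tymoczko \cite{BT}, and the only commentary the authors add is that their first bullet is equivalent to Bowden--Tymoczko's normalization $\mx{b^{(1)}}=\mx{1}$. Judged on its own, your argument is correct. Spanning is hypothesis, so minimality is the whole content; your triangular peeling does prove that $R_{G,\alpha}=\bigoplus_s R\,\mx{b^{(i_s)}}$, and the pivot of that step --- that $c_sa_s\equiv 0\pmod m$ already forces $c_s\mx{b^{(i_s)}}=\mx{0}$ because every entry of $\mx{b^{(i_s)}}$ lies in $\{0,a_s\}$ --- is exactly what lets the induction continue past the first coordinate. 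The identification $R\,\mx{b^{(i_s)}}\cong\Z/\bigl(m/\gcd(m,a_s)\bigr)\Z$ and the observation that $a_{j_1}\mid\cdots\mid a_{j_t}$ forces the summand orders into a divisibility chain of integers at least $2$ are both right, and then uniqueness of invariant factors (or, more concretely, $\dim_{\mathbb{F}_p}\bigl(R_{G,\alpha}/pR_{G,\alpha}\bigr)=t$ for any prime $p$ dividing the smallest invariant factor) gives that no generating set has fewer than $t$ elements. You are also right to single out the implicit hypotheses: the argument needs the $t$ splines to have pairwise distinct leading positions with nonzero leading entries $a_s\not\equiv 0\pmod m$; this is built into the flow-up notation and is necessary for the statement to be true, but it is unstated, and your proof is exactly where it gets used. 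The only claim I would soften is that the argument transfers ``verbatim'' to an arbitrary PID quotient $R/I$: the direct-sum and divisibility-chain parts do, but the final counting step should then be phrased via $M/PM$ for a suitable prime ideal $P$ rather than via orders of finite groups.
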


\noindent Bowden and Tymoczko use $\mx{b^{(1)}}=\mx{1}$ the identity spline in \cite{BT} but the above condition is equivalent. 

One of our main tools is a subset of vertices of an edge-labeled graph called zero-connected components, on which labels in a spline must be equal.  

\begin{definition}
Let $G$ be a connected graph in $\Z/m\Z$. Define an equivalence relation on the vertices of $G$ by $v_i\sim_0 v_j$ if and only if $v_i$ and $v_j$ are connected by a path of edges labeled zero.  We call such a path a zero path, and we call an equivalence class defined by this relation a zero-connected component. 
\end{definition}

\begin{remark}\label{zero cc same}
Recall from Remark \ref{zero_edge_same_label_} that any vertices connected by a zero path must have the same label in a spline. Therefore all vertices in a zero-connected component have the same label in a spline. Proposition \ref{same label} gives a partial converse to this remark.
\end{remark}

In our algorithm we consider the same graph with edge labels in different base rings. In particular we are concerned with the zero-connected components of the same graph with edge labels in $\Z/p^i\Z$ for $i=1,2,\dots, k$. Let $V^{(i,\beta)}$ denote the the zero-connected component of a graph $G$ with edge labels in $\Z/p^\beta\Z$ whose smallest-index vertex is $v_i$.

\begin{example}
Figure \ref{zero cc ex 8}, Figure \ref{zero cc ex 4}, and Figure \ref{zero cc ex 2} show the same graph with edge labels in $\Z/8\Z$, $\Z/4\Z$ and $\Z/2\Z$ respectively. The zero-connected components of the graph in Figure \ref{zero cc ex 8} are  $V^{(1,3)}=\{v_1\}, \, V^{(2,3)}=\{v_2\}, \,V^{(3,3)}=\{v_3\}, \,V^{(4,3)}=\{v_4\}$. The zero-connected components of the graph in Figure \ref{zero cc ex 4} are $V^{(1,2)}=\{v_1\}, \, V^{(2,2)}=\{v_2,v_3,v_4\}$. The only zero-connected component of the graph in Figure \ref{zero cc ex 2} is $V^{(1,1)}=\{v_1,v_2,v_3,v_4\}$. 
\begin{figure}[h!]

\begin{minipage}[c]{0.32\linewidth}
\centering
\begin{tikzpicture}[shorten >=1pt,auto,node distance=1.7cm,
  thick,main node/.style={rectangle,draw,font=\sffamily\bfseries}]
  
 \node[main node] (1) {$v_1$};
 \node[main node] (2) [above right of = 1] {$v_2$};
 \node[main node] (3) [above left of = 2] {$v_3$};
 \node[main node] (4) [below left of =3] {$v_4$};

  \path[every node/.style={color=black, font=\sffamily\small}]
    
    (2) edge node {\color{red}$2$} (1)
    (1) edge node {\color{red}$2$} (4)
    (4) edge node {\color{red}$2$} (3)
    	edge node {\color{red}$4$} (2)
    (3) edge node {\color{red}$4$} (2);

\end{tikzpicture}
\caption{Graph over $\Z/8\Z$}\label{zero cc ex 8}

\end{minipage}
\begin{minipage}[c]{0.32\linewidth}
\centering
\begin{tikzpicture}[shorten >=1pt,auto,node distance=1.7cm,
  thick,main node/.style={rectangle,draw,font=\sffamily\bfseries}]
  
 \node[main node] (1) {$v_1$};
 \node[main node] (2) [above right of = 1] {$v_2$};
 \node[main node] (3) [above left of = 2] {$v_3$};
 \node[main node] (4) [below left of =3] {$v_4$};

  \path[every node/.style={color=black, font=\sffamily\small}]
    
    (2) edge node {\color{red}$2$} (1)
    (1) edge node {\color{red}$2$} (4)
    (4) edge node {\color{red}$2$} (3)
    	edge node {\color{red}$0$} (2)
    (3) edge node {\color{red}$0$} (2);

\end{tikzpicture}

\caption{Graph over $\Z/4\Z$}\label{zero cc ex 4}
\end{minipage}
\begin{minipage}[c]{0.32\linewidth}
\centering
\begin{tikzpicture}[shorten >=1pt,auto,node distance=1.7cm,
  thick,main node/.style={rectangle,draw,font=\sffamily\bfseries}]
  
 \node[main node] (1) {$v_1$};
 \node[main node] (2) [above right of = 1] {$v_2$};
 \node[main node] (3) [above left of = 2] {$v_3$};
 \node[main node] (4) [below left of =3] {$v_4$};

  \path[every node/.style={color=black, font=\sffamily\small}]
    
    (2) edge node {\color{red}$0$} (1)
    (1) edge node {\color{red}$0$} (4)
    (4) edge node {\color{red}$0$} (3)
    	edge node {\color{red}$0$} (2)
    (3) edge node {\color{red}$0$} (2);

\end{tikzpicture}
\caption{Graph over $\Z/2\Z$}\label{zero cc ex 2}
\end{minipage}

\end{figure}

\end{example}

\begin{remark}\label{prime power labels}
Zero-divisors in $\Z/p^k\Z$ are multiples of $p^\beta$ for some $0\leq \beta \leq k$.  If $u$ is a unit modulo $p^k$ the product  $u \cdot p^\beta$ generates the same ideal as $p^\beta$.  Thus the  minimal coset representatives in $\Z$ of the zero-divisors of $\Z/p^k\Z$ are the prime powers $p^\beta$ where $0\leq \beta \leq k$. Following the convention of Remark \ref{edge convention},  the only edge labels in $\Z/p^k\Z$ are $ p^\beta $ where $0\leq \beta \leq k$.
\end{remark}

The following proposition defines a spline that we use in the algorithm to produce minimum generating sets for splines over $\Z/p^k\Z$.

\begin{proposition}\label{new spline}
Let $G$ be a connected graph with edge-labeling $\alpha: E \to \{\text{ideals of } \Z/p^\beta\Z\}$ and let $[\Z/p^\beta\Z]_{G,\alpha}$ be the ring of splines on $(G,\alpha)$. The following vertex-labeling is a spline in $[\Z/p^\beta\Z]_{G,\alpha}$.
\[
\mx{b^{(i,\beta)}}_{v} = 
\begin{cases}
p^{\beta-1} & \text{if  } v \in V^{(i,\beta)} \\
0 & \text{if  } v \not\in V^{(i,\beta)} \\
\end{cases}
\]
Moreover if $\mx{b}$ is a spline that is zero except on $V^{(i,\beta)}$ and there is at least one edge labeled $p^{\beta-1}$ in $(G,\alpha)$ then $\mx{b} = c\mx{b^{(i,\beta)}}$ for some constant $c$ in $\Z/p^\beta\Z$.
\end{proposition}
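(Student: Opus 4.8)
The plan is to prove the two claims separately: that $\mx{b^{(i,\beta)}}$ is a spline, and that every spline supported on $V^{(i,\beta)}$ is a scalar multiple of it. For the first claim I would check the spline condition edge by edge. By Remark \ref{prime power labels} every edge of $(G,\alpha)$ carries a label $p^\gamma$ with $0 \le \gamma \le \beta$. If an edge has both endpoints in $V^{(i,\beta)}$, or both endpoints outside it, the two relevant entries of $\mx{b^{(i,\beta)}}$ coincide, so their difference is $0$ and lies in any ideal. If an edge $uv$ has $u \in V^{(i,\beta)}$ and $v \notin V^{(i,\beta)}$, its label cannot be $0$ — otherwise $u \sim_0 v$ and $v$ would lie in the same zero-connected component as $u$ — so $\gamma \le \beta - 1$, hence $p^{\beta-1} \in \langle p^\gamma\rangle = \alpha(uv)$; since the difference of the two entries is $\pm p^{\beta-1}$, the condition holds. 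This covers all cases, so $\mx{b^{(i,\beta)}} \in [\Z/p^\beta\Z]_{G,\alpha}$.

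For the second claim, let $\mx{b}$ be a spline that vanishes off $V^{(i,\beta)}$. The vertices of the zero-connected component $V^{(i,\beta)}$ are joined by zero paths, so Remark \ref{zero cc same} forces $\mx{b}$ to be constant on $V^{(i,\beta)}$; write $\mx{b}_v = c'$ for $v \in V^{(i,\beta)}$ and $\mx{b}_v = 0$ otherwise. It then suffices to show $p^{\beta-1} \mid c'$ in $\Z/p^\beta\Z$, since in that case $c' = c\,p^{\beta-1}$ for some $c \in \Z/p^\beta\Z$ and $\mx{b} = c\,\mx{b^{(i,\beta)}}$. To get the divisibility I would locate an edge $uv$ with $u \in V^{(i,\beta)}$, $v \notin V^{(i,\beta)}$, and label exactly $p^{\beta-1}$: the spline condition on that edge reads $c' = \mx{b}_u - \mx{b}_v \in \langle p^{\beta-1}\rangle$, which is precisely what is needed. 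To produce this edge I would use the hypothesis about an edge labeled $p^{\beta-1}$ in the form that $V^{(i,\beta)}$ is a \emph{proper} subset of the zero-connected component modulo $p^{\beta-1}$ containing $v_i$ — equivalently, that $V^{(i,\beta)}$ genuinely refines a coarser zero-connected component at the previous prime power. Granting that, take $w$ in the larger (mod $p^{\beta-1}$) component but outside $V^{(i,\beta)}$, and a path from $v_i$ to $w$ along edges divisible by $p^{\beta-1}$; walk along it until the first edge that leaves $V^{(i,\beta)}$. Its label $p^\gamma$ satisfies $\gamma \ge \beta-1$, and $\gamma \ge \beta$ is impossible since then that edge would be $0$ mod $p^\beta$ and its far endpoint would remain in $V^{(i,\beta)}$; hence $\gamma = \beta-1$ and we have the desired boundary edge.

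I expect the main obstacle to be exactly this last step: extracting a boundary edge of $V^{(i,\beta)}$ with label \emph{precisely} $p^{\beta-1}$. Knowing only that some edge of $G$ is labeled $p^{\beta-1}$ is not by itself enough, since such an edge could sit far from $V^{(i,\beta)}$ (and if $V^{(i,\beta)}$ happened to be all of $V$ there would be no boundary edge at all), so the argument must invoke the hypothesis in the stronger form above. Confirming that the hypothesis really does supply a boundary edge in the setting where the proposition is invoked — for instance because $\mx{b^{(i,\beta)}}$ is introduced exactly when a zero-connected component splits on passing from $p^{\beta-1}$ to $p^{\beta}$ — is the point I would treat most carefully.
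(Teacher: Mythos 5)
Your verification that $\mx{b^{(i,\beta)}}$ is a spline is the same as the paper's: the only nontrivial case is a boundary edge of $V^{(i,\beta)}$, whose label cannot be $0$ and hence is $p^\gamma$ with $\gamma\le\beta-1$, so $p^{\beta-1}\in\langle p^\gamma\rangle$. For the ``moreover'' clause the two arguments diverge, and yours is the more careful one. The paper's proof disposes of this claim in one sentence, asserting that if \emph{some} edge of $(G,\alpha)$ is labeled $p^{\beta-1}$ then the spline condition forces the constant value of $\mx{b}$ on $V^{(i,\beta)}$ into $\langle p^{\beta-1}\rangle$; it never localizes that edge to the boundary of $V^{(i,\beta)}$, which is exactly the point you flag. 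You are right to worry: taken literally the hypothesis is too weak. For instance, on the path $v_1$--$v_2$--$v_3$ over $\Z/p^3\Z$ with $\alpha(v_1v_2)=p^2$ and $\alpha(v_2v_3)=p$, the labeling $(0,0,p)$ is a spline supported on $V^{(3,3)}=\{v_3\}$ that is not a scalar multiple of $\mx{b^{(3,3)}}=(0,0,p^2)$, even though an edge labeled $p^{\beta-1}=p^2$ is present in the graph. Your repaired reading of the hypothesis --- that $V^{(i,\beta)}$ is properly contained in a zero-connected component mod $p^{\beta-1}$, which yields a boundary edge labeled exactly $p^{\beta-1}$ by the walk-until-you-leave argument --- is precisely the content of Remark \ref{edge btwn zero cc}, and it holds in every place the ``moreover'' clause is actually used, since Algorithm \ref{mod p^k algorithm} only introduces $\mx{b^{(i,\beta)}}$ for indices $i\notin I_{\beta-1}$, i.e.\ for components that split off when passing from $p^{\beta-1}$ to $p^\beta$. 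So your proof is correct under the hypothesis the paper actually needs, and it fills a genuine gap in the paper's own justification of the second claim rather than merely reproducing it.
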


\begin{proof}
Let $v_j$ and $v_k$ be distinct vertices of the graph $G$. If vertices $v_j$ and $v_k$ are either both in $V^{(i,\beta)}$ or both not in $V^{(i,\beta)}$ then the difference $\mx{b^{(i,\beta)}}_{v_j}-\mx{b^{(i,\beta)}}_{v_k} = 0$ which always satisfies the spline condition. Suppose that $v_j\in V^{(i,\beta)}$ and $v_k\notin V^{(i,\beta)}$ so by construction $\mx{b^{(i,\beta)}}_{v_j}-\mx{b^{(i,\beta)}}_{v_k} = p^{\beta-1}$ is the difference of the vertex labels. The edge labels of $(G,\alpha)$ are $ p^{\gamma} $ for $1\leq \gamma \leq \beta$ or $ 0 $ as discussed in Remark \ref{prime power labels}. Since $v_j $ and $v_k$ are not in the same zero-connected component, their connecting edge is not labeled zero. Therefore $\mx{b^{(i,\beta)}}_{v_j}-\mx{b^{(i,\beta)}}_{v_k} = p^{\beta-1} \in \langle p^{\gamma} \rangle$ satisfying the spline condition. Furthermore, if $\gamma=\beta-1$ over $\Z/p^\beta\Z$ for at least one of the edge labels of $(G,\alpha)$ then to satisfy the spline condition, the difference $\mx{b^{(i,\beta)}}_{v_j}-\mx{b^{(i,\beta)}}_{v_k}$ must be at least $p^{\beta-1}$. Therefore when there is at least one $p^{\beta-1}$ edge label, for any spline $\mx{b}$ that is zero except on $V^{(i,\beta)}$, where it is constant, there exists $c\in \Z/p^\beta\Z$ such that $\mx{b} = c\mx{b^{(i,\beta)}}$. 
\end{proof}

We produce a minimum generating set for splines over $\Z/p\Z$ using the splines $\mx{b^{(i,1)}}$ defined in Proposition \ref{new spline}.

\begin{theorem}\label{mod p algorithm}
Let $G$ be a connected graph and let $[\Z/p\Z] _{G,\alpha}$ be the ring of splines over $G$ with edge-labeling function $\alpha:E \to \{\text{ideals of } \Z/p\Z\}$. Then the set $\mathbb{B}_p=\{ \mx{b^{(i,1)}}\,\, | \,\, V^{(i,1)}\subset V\}$ defined in Proposition \ref{new spline} is a flow-up minimum generating set for $[\Z/p\Z] _{G,\alpha}$. 

\end{theorem}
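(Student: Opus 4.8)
The plan is to make the structure of splines over the field $\Z/p\Z$ completely explicit, deduce that $\mathbb{B}_p$ spans, and then invoke the Bowden--Tymoczko criterion (Theorem \ref{minimum gen set crit}) to promote this to the statement ``flow-up minimum generating set.'' The one structural input doing all the work is that over $\Z/p\Z$ the ring of splines decomposes completely along the zero-connected components.

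First I would record the shape of splines over $\Z/p\Z$. By Remark \ref{prime power labels} with $k=1$, every edge of $(G,\alpha)$ is labeled by $p^{0}=1$ or by $p^{1}=0$; that is, every edge label is a unit or $0$. By Remark \ref{zero_edge_same_label_} a unit-labeled edge imposes no condition while a $0$-labeled edge forces its two endpoints to carry the same value. Hence a vertex-labeling $\mx{f}$ is a spline exactly when it is constant on each zero-connected component $V^{(i,1)}$, with no constraint whatsoever between distinct components. Since the sets $V^{(i,1)}$ partition $V$ and $\mx{b^{(i,1)}}$ is, by Proposition \ref{new spline} with $\beta=1$, the indicator spline of $V^{(i,1)}$ (it equals $p^{0}=1$ on $V^{(i,1)}$ and $0$ elsewhere), any spline $\mx{f}$ can be written as $\sum_i c_i\,\mx{b^{(i,1)}}$, where $c_i\in\Z/p\Z$ is the common value of $\mx{f}$ on $V^{(i,1)}$. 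Thus $\mathbb{B}_p$ is a spanning set for $[\Z/p\Z]_{G,\alpha}$.

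Next I would verify the three hypotheses of Theorem \ref{minimum gen set crit}. Order the zero-connected components by their minimal-index vertices $1=i_1<i_2<\dots<i_t$; note $i_1=1$, since $v_1$ is the smallest-index vertex of its own component, so $\mx{b^{(1,1)}}=\mx{b^{(i_1,1)}}$ has $\mx{b^{(1,1)}}_{v_1}=1\neq 0$ and so has no leading zeros. Each $\mx{b^{(i_s,1)}}$ is a flow-up spline: if $t<i_s$ then $v_t\notin V^{(i_s,1)}$ (otherwise the minimal index in that component would be at most $t<i_s$), so $\mx{b^{(i_s,1)}}_{v_t}=0$. Moreover each $\mx{b^{(i_s,1)}}$ takes only the values $0$ and $1$, so it is a constant flow-up spline with $a_s=1$ for every $s$, and the list $1=a_1,a_2,\dots,a_t$ trivially satisfies $a_1\mid a_2\mid\dots\mid a_t$. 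All hypotheses of Theorem \ref{minimum gen set crit} are met, so $\mathbb{B}_p$ is a flow-up minimum generating set for $[\Z/p\Z]_{G,\alpha}$.

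I do not anticipate a genuine obstacle: once the decomposition of the spline module along zero-connected components is written down, spanning is immediate and the Bowden--Tymoczko hypotheses reduce to the elementary fact that $v_i$ is by definition the smallest-index vertex of $V^{(i,1)}$. The only point needing a little care is a convention check, namely that $\mx{b^{(i,1)}}$ counts as a ``constant flow-up spline'' in the sense used by Theorem \ref{minimum gen set crit}, i.e.\ that all its entries lie in $\{0,a_s\}$, which is exactly what Proposition \ref{new spline} delivers for $\beta=1$. As a sanity check (and an alternative route to minimality), $[\Z/p\Z]_{G,\alpha}$ is a vector space of dimension $t$ over the field $\Z/p\Z$, so no generating set can have fewer than $t=|\mathbb{B}_p|$ elements.
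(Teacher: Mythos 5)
Your proposal is correct and follows essentially the same route as the paper: both arguments show spanning by observing that a spline is constant on each zero-connected component and writing it as $\sum_i \mx{p}_{v_i}\mx{b^{(i,1)}}$, then invoke the Bowden--Tymoczko criterion (Theorem \ref{minimum gen set crit}) for minimality. You simply spell out the verification of the criterion's hypotheses in more detail than the paper does.
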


\begin{proof}
We show that $\mathbb{B}_p$ generates all splines in $[\Z/p\Z] _{G,\alpha}$. Let $\mx{p}$ be an arbitrary spline in $[\Z/p\Z] _{G,\alpha}$. All vertices in a zero-connected component have the same label so $\mx{p}_{v_j} = \mx{p}_{v_i}$ for all $v_j \in V^{(i,1)}$. We write $\mx{p}$ as a linear combination of splines in $\mathbb{B}_p$ as such

\[
\mx{p} = \sum_{V^{(i,1)}} \mx{p}_{v_i}\mx{b^{(i,1)}}
\]

The set $\mathbb{B}_p$ generates $[\Z/p\Z]_{G,\alpha}$ and meets the criteria in Theorem \ref{minimum gen set crit}, so it is a minimum generating set for $[\Z/p\Z]_{G,\alpha}$.
\end{proof}

In the results to follow we denote the standard quotient map $\rho_\beta: \Z/p^\ell\Z \to \Z/p^\beta\Z$ for $\beta$ and $\ell$ with  $1\leq \beta\leq \ell \leq k$. Additionally let $\alpha_\beta$ denote the edge-labeling function that sends each edge $uv$ to the ideal $\alpha_\beta(uv) = \rho_\beta(\alpha(uv))$.

We build upon the formula given in Theorem \ref{mod p algorithm} to algorithmically determine a flow-up minimum generating set for splines in $[\Z/p^k\Z]_{G,\alpha}$. We consider the edge-labeled graph $(G,\alpha_\beta)$ and recursively find minimum generating sets $\mathbb{B}_{p^\beta}$ by, in effect, adding the necessary elements to $\mathbb{B}_{p^{\beta-1}}$. We begin with the following lemma, which gives a natural way to identify a spline $\mx{p} \in [\Z/p^\beta\Z]_{G,\alpha_\beta}$ with a vertex-labeling in $(\Z/p^\ell\Z)^{|V|}$ and shows that the vertex-labeling is a spline in $[\Z/p^\ell\Z]_{G,\alpha_\ell}$.

\begin{lemma}\label{pull back to beta}
Let $G$ be a graph with edge-labeling function $\alpha: E \to \{ \text{ideals of } \Z/p^\ell\Z\}$. Let $\mx{p}$ be a spline in $[\Z/p^{\beta}\Z]_{G,\alpha_\beta}$ where $1\leq \beta\leq \ell$, and let $\mx{\bar{p}}$ be a vertex-labeling in $ \left(\Z/{p^{\ell}\Z}\right)^{|V|}$ so that $\mx{p}_v$ and $\mx{\bar{p}}_v$ have the same minimal coset representative in $\Z$ for each $v\in V$. Then $\mx{\bar{p}}$ is a spline in $[\Z/p^\ell\Z]_{G,\alpha_\ell}$.
\end{lemma}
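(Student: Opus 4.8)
The plan is to verify the spline condition for $\mx{\bar{p}}$ one edge at a time, passing through minimal coset representatives in $\Z$ so that the congruences modulo $p^\beta$ and modulo $p^\ell$ can be compared on equal footing. First I would record the setup: since $\rho_\ell$ is the identity on $\Z/p^\ell\Z$ we have $\alpha_\ell = \alpha$; by Remark \ref{prime power labels} every edge $uv$ carries a label $\alpha(uv) = \langle p^\gamma \rangle$ for some $\gamma$ with $0 \leq \gamma \leq \ell$ (where $\gamma = \ell$ means the zero ideal and $\gamma = 0$ the unit ideal); and $\alpha_\beta(uv) = \rho_\beta(\langle p^\gamma \rangle) = \langle p^{\min(\gamma,\beta)} \rangle$ in $\Z/p^\beta\Z$. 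For each vertex $v$ let $a_v \in \{0,1,\dots,p^\beta-1\}$ be the minimal coset representative of $\mx{p}_v$; by hypothesis $a_v$ is also the minimal coset representative of $\mx{\bar{p}}_v$, legitimately viewed inside $\{0,1,\dots,p^\ell-1\}$ precisely because $\beta \leq \ell$.

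Then I would fix an edge $uv$ and split into two cases. If $\gamma \geq \beta$, the edge $uv$ becomes a zero-edge over $\Z/p^\beta\Z$, so the spline hypothesis for $\mx{p}$ together with Remark \ref{zero_edge_same_label_} forces $\mx{p}_u = \mx{p}_v$, hence $a_u = a_v$, hence $\mx{\bar{p}}_u = \mx{\bar{p}}_v$ in $\Z/p^\ell\Z$; then $\mx{\bar{p}}_u - \mx{\bar{p}}_v = 0$ lies in every ideal, in particular in $\alpha_\ell(uv)$. If $\gamma < \beta$, the spline condition for $\mx{p}$ says $\mx{p}_u - \mx{p}_v \in \langle p^\gamma \rangle \subseteq \Z/p^\beta\Z$; since $p^\gamma \mid p^\beta$, membership in this ideal is equivalent to the honest divisibility $p^\gamma \mid (a_u - a_v)$ in $\Z$. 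As $\gamma < \beta \leq \ell$, that same divisibility shows $\mx{\bar{p}}_u - \mx{\bar{p}}_v = (a_u - a_v) \bmod p^\ell$ lies in $\langle p^\gamma \rangle = \alpha_\ell(uv)$. In either case the spline condition holds on $uv$, and since $uv$ was arbitrary, $\mx{\bar{p}} \in [\Z/p^\ell\Z]_{G,\alpha_\ell}$.

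I do not expect a genuinely hard step here: the argument is essentially bookkeeping between the two moduli. The points that need care are the equivalence between ``$\mx{p}_u - \mx{p}_v \equiv 0 \bmod p^\gamma$ inside $\Z/p^\beta\Z$'' and ``$p^\gamma \mid (a_u - a_v)$ in $\Z$'' (which uses $\gamma \leq \beta$), and the observation that two representatives in $\{0,\dots,p^\beta-1\}$ whose difference is divisible by $p^\beta$ must coincide. Once these are in place the two cases are immediate. If anything, the most delicate point is just making precise how $\mx{\bar{p}}$ arises from $\mx{p}$ and why it is well defined --- namely that $\beta \leq \ell$ guarantees each minimal coset representative of an element of $\Z/p^\beta\Z$ names a unique element of $\Z/p^\ell\Z$.
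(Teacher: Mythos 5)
Your proof is correct, but it takes a different (and more direct) route than the paper's. The paper never compares the congruences between $\Z/p^\beta\Z$ and $\Z/p^\ell\Z$ edge by edge; instead it reduces the lemma to the claim that any integer lift of a spline over $\Z/m\Z$ is a spline over $\Z$, for a specially chosen modulus $m$ divisible by every edge label, and then appeals to commutativity of the quotient maps $\Z\to\Z/m\Z\to\Z/p^k\Z$ to transfer the conclusion back down. The computational heart is the same as yours --- write the lifted difference as $\ell_{uv}t_{uv}+rm$ and factor out $\ell_{uv}$ because $\ell_{uv}\mid m$ --- but the paper's detour through $\Z$ is really in service of Theorem \ref{integer theorem} later on, and it is looser about exactly how the lemma as stated follows. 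Your argument stays entirely inside the two prime-power quotients and makes explicit a point the paper glosses over: when $\gamma\geq\beta$ the edge becomes a zero-edge modulo $p^\beta$, so one cannot argue by divisibility of the reduced generator and must instead use that the spline condition forces $\mx{p}_u=\mx{p}_v$, hence $a_u=a_v$ and the lifted difference is $0$. That case split, together with your observation that membership in $\langle p^\gamma\rangle\subseteq\Z/p^\beta\Z$ is equivalent to $p^\gamma\mid(a_u-a_v)$ in $\Z$ precisely because $\gamma\leq\beta$, is exactly the right bookkeeping; what your version buys is a self-contained verification of the stated lemma, while the paper's version buys a reusable lifting statement over $\Z$.
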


\begin{proof}
Because the quotient maps $\Z\to \Z/m\Z,\, \Z/m\Z\to \Z/p^k\Z$ and $\Z \to \Z/p^k\Z$ commute when where $p^k$ divides $m$, so do the induced quotient maps on the rings of splines. Thus it is sufficient to show that the the pull back of a spline on $G$ over $\Z/m\Z$ is a spline on $G$ over the integers. So as to easily apply this lemma to a later result (Theorem \ref{integer theorem}), we choose $m = lcm( \ell_1, \ell_2, \dots, \ell_k)\cdot p_1$ where $\ell_1, \ell_2, \dots, \ell_k$ are generators of the edge-label ideals and $p_1$ is the smallest prime factor of $\ell_1 \ell_2 \dots \ell_k$. 

Let $\nu: E \to \{ \text{ideals of } \Z\}$ be an edge-labeling function on $G$ and let $\rho:\Z\to\Z/m\Z$ be the standard quotient map. Finally let $\nu_m$ denote the edge-labeling function that sends each edge $uv$ to the ideal $\nu_m(uv) = \rho_m(\nu(uv))$. Suppose $\mx{q}_v$ is a spline in $[\Z/m\Z]_{G,\nu_m}$. We claim that a vertex-labeling $\mx{p} \in \Z^{|V|}$ with $\rho_m(\mx{p}_v)=\mx{q}_v$ is a spline in $\Z_{G,\nu}$ regardless of the coset representative chosen for $\mx{q}_v$.

 Let $\mx{q}$ be a spline in $[\Z/m\Z]_{G,\nu_m}$ so at each vertex 

\[\mx{q}_v = a + m\Z\]

Let $\mx{p} \in \Z^{|V|}$ be a vertex-labeling with $\rho_m(\mx{p}_v)=\mx{q}_v$. Suppose that $\nu(uv)= \langle \ell_{uv} \rangle$. By definition a spline $\mx{q} \in [\Z/m\Z]_{G,\nu_m}$ is such that 

\[\mx{q}_u - \mx{q}_v = \ell_{uv}t_{uv} \mod m\]

for some constant $t_{uv}$ for all adjacent vertices $u,v$. By construction $\ell_{uv}$ divides $m$, say $m=w\ell_{uv}$. This means 

\[\begin{array}{lll}
\mx{p}_u-\mx{p}_v & = \mx{q}_u - \mx{q}_v \mod m&\\
&&\\
& =\ell_{uv}t_{uv} \mod m &\\
&& \\
& = \ell_{uv}t_{uv} + rm&\\
&& \\
& = \ell_{uv}(t_{uv} + rw)&\in \langle \ell_{uv}\rangle\\
\end{array}\]

for a constant $r$. This holds for all edges $uv$ so $\mx{p}$ is a spline in $\Z_{G,\nu}$. 
\end{proof}

Given a set of splines $\mathbb{B}_{p^{\beta-1}}$ in $[\Z/p^{\beta-1}\Z]_{G,\alpha_{\beta-1}}$ we use $\overline{\mathbb{B}}_{p^{\beta-1}}$ to denote the set of splines in $[\Z/p^\beta\Z]_{G,\alpha_\beta}$ to which $\mathbb{B}_{p^{\beta-1}}$ is identified.

\begin{remark}\label{edge btwn zero cc}
Consider the edge labeled graphs $(G,\alpha_\beta)$ and $(G,\alpha_{\beta-1})$ whose edges are labeled with ideals in $\Z/p^{\beta}\Z$ and $\Z/p^{\beta-1}\Z$ respectively. Suppose there is a vertex $v$ such that $v\in V^{(i,\beta-1)}$ and $v\not\in V^{(i,\beta)}$ for some $\beta$. That is $v$ is in the zero-connected component indexed by $v_i$ in $(G,\alpha_{\beta-1})$ but $v$ is not in the zero-connected component indexed by $v_i$ in $(G,\alpha_{\beta})$. Let $V^{(j,\beta)}$ be the zero-connected component of $(G,\alpha_{\beta})$ that contains $v$. Then there is necessarily at least one edge with label $p^{\beta-1}$ that connects a vertex in $V^{(i,\beta)}$ with a vertex in $V^{(j,\beta)}$.
\end{remark}

The following algorithm uses Theorem \ref{mod p algorithm} as a base case. We show inductively that the flow-up minimum generating set $\mathbb{B}_{p^k}$ is the union of the identification in $\Z/p^k\Z$ of $\mathbb{B}_{p^{k-1}}$ and a particular set of splines of the form defined in Proposition \ref{new spline}.

\begin{algorithm}\label{mod p^k algorithm}

Let $G$ be a graph with edge-labeling function $\alpha: E \to \{ \text{ideals of } \Z/p^k\Z\}$. Let $I_{i}$ denote the indices of the zero-connected components of $(G,\alpha_{i})$. Suppose that $\mathbb{B}_{p^{\beta-1}}$ is a flow-up minimum generating set for $[\Z/p^{\beta-1}\Z]_{G,\alpha_{\beta-1}}$ with each generator associated to a unique zero-connected component indexed by $I_{\beta-1}$. 

We recursively define a set of splines as follows.  

\begin{itemize}
\item Find the flow-up minimum generating set $\mathbb{B}_{p}$ for $[\Z/p \Z] _{G,\alpha_{1}}$ defined in Theorem \ref{mod p algorithm}.

\item Then define

\begin{equation}\label{p^beta mgs}
\mathbb{B}_{p^{\beta}} =\overline{\mathbb{B}}_{p^{\beta-1}} \cup \{ \mx{b^{(i,\beta)}} \,\, | \,\, V^{(i,\beta)} \subseteq (G,\alpha_\beta) \textup{  for which $i \not \in I_{\beta - 1}$}\}
\end{equation}
\end{itemize}
\end{algorithm}

The following theorem states that the set of splines $\mathbb{B}_{p^\beta}$ defined by Algorithm \ref{mod p^k algorithm} is a flow-up minimum generating set for $[\Z/p^\beta\Z]_{G,\alpha_\beta}$. We give an inductive proof of the claim. 

\begin{theorem}\label{mod p^k theorem}
Let $G$ be a graph with edge-labeling function $\alpha: E \to \{ \text{ideals of } \Z/p^k\Z\}$. The set $\mathbb{B}_{p^{\beta}} $ defined by Algorithm \ref{mod p^k algorithm} is a flow-up minimum generating set for $[\Z/p^\beta\Z]_{G,\alpha}$.
\end{theorem}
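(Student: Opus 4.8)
The plan is to argue by induction on $\beta$. The base case $\beta=1$ is precisely Theorem \ref{mod p algorithm}. For the inductive step I would carry along a slightly strengthened hypothesis matching the bookkeeping in Algorithm \ref{mod p^k algorithm}: namely that $\mathbb{B}_{p^{\beta-1}}$ is a flow-up minimum generating set for $[\Z/p^{\beta-1}\Z]_{G,\alpha_{\beta-1}}$ each of whose elements has the form $\mx{b^{(i,\gamma)}}$ from Proposition \ref{new spline} for some $\gamma\leq\beta-1$, has leading vertex $v_i$ with $i=\min V^{(i,\gamma)}$, and these leading indices are exactly $I_{\beta-1}$. I would also record the elementary monotonicity fact that zero-connected components only refine as the modulus grows, so that $I_{\beta-1}\subseteq I_{\beta}$ and each $V^{(j,\beta)}$ is contained in a unique $V^{(i,\beta-1)}$.

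First I would check minimality via Theorem \ref{minimum gen set crit}. By Lemma \ref{pull back to beta} each element of $\overline{\mathbb{B}}_{p^{\beta-1}}$ is again a spline over $\Z/p^\beta\Z$, still a constant flow-up spline with the same power-of-$p$ nonzero value, while by Proposition \ref{new spline} each new $\mx{b^{(i,\beta)}}$ is a constant flow-up spline with nonzero value $p^{\beta-1}$. Thus every nonzero value appearing in $\mathbb{B}_{p^\beta}$ lies in $\{1,p,\dots,p^{\beta-1}\}$, which is a chain under divisibility (duplicates cause no trouble), and the pullback of $\mx{b^{(1,1)}}$ is the unique element with a nonzero leading entry, that entry being $1$. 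Hence Theorem \ref{minimum gen set crit} applies as soon as we know $\mathbb{B}_{p^\beta}$ spans $[\Z/p^\beta\Z]_{G,\alpha_\beta}$.

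For spanning I would take an arbitrary spline $\mx{q}\in[\Z/p^\beta\Z]_{G,\alpha_\beta}$, reduce it mod $p^{\beta-1}$ to a spline $\rho_{\beta-1}^*\mx{q}$ over $\Z/p^{\beta-1}\Z$, and use the inductive hypothesis to write the latter as a $\Z/p^{\beta-1}\Z$-combination of $\mathbb{B}_{p^{\beta-1}}$. Lifting the coefficients arbitrarily and subtracting the corresponding combination of $\overline{\mathbb{B}}_{p^{\beta-1}}\subseteq\mathbb{B}_{p^\beta}$ from $\mx{q}$ produces a spline $\mx{q'}$ over $\Z/p^\beta\Z$ whose reduction mod $p^{\beta-1}$ vanishes, so every entry of $\mx{q'}$ is a multiple of $p^{\beta-1}$ in $\Z/p^\beta\Z$. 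Since $\mx{q'}$ must be constant on each zero-connected component of $(G,\alpha_\beta)$ (Remark \ref{zero_edge_same_label_}), it equals $\sum_{i\in I_\beta}s_i\,\mx{b^{(i,\beta)}}$ for suitable scalars $s_i$, where each $\mx{b^{(i,\beta)}}$ is a spline by Proposition \ref{new spline}. So it suffices to prove the Claim that $\mx{b^{(i,\beta)}}\in\operatorname{span}\mathbb{B}_{p^\beta}$ for every $i\in I_\beta$.

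The Claim is where I expect the real work to be, and I would prove it by descending induction on the leading index (equivalently, induction on $|V|-i$). If $i\notin I_{\beta-1}$ then $\mx{b^{(i,\beta)}}\in\mathbb{B}_{p^\beta}$ by construction. If $i\in I_{\beta-1}$, let $\mx{b^{(i,\gamma)}}\in\mathbb{B}_{p^{\beta-1}}$ be the generator with leading vertex $v_i$; its identification in $\mathbb{B}_{p^\beta}$ is constant equal to $p^{\gamma-1}$ on $V^{(i,\gamma)}$, so multiplying by $p^{\beta-\gamma}$ gives the spline that is $p^{\beta-1}$ on all of $V^{(i,\gamma)}$. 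By the refinement fact, $V^{(i,\gamma)}$ is a disjoint union $V^{(i,\beta)}\sqcup V^{(j_1,\beta)}\sqcup\cdots$ of zero-connected components of $(G,\alpha_\beta)$ with every $j_\ell>i$ (since $i=\min V^{(i,\gamma)}$ and $v_i$ lies in the piece $V^{(i,\beta)}$), hence $p^{\beta-\gamma}\,\overline{\mx{b^{(i,\gamma)}}}=\mx{b^{(i,\beta)}}+\sum_\ell\mx{b^{(j_\ell,\beta)}}$, which expresses $\mx{b^{(i,\beta)}}$ in terms of an element of $\mathbb{B}_{p^\beta}$ and splines $\mx{b^{(j_\ell,\beta)}}$ with strictly larger leading index, already handled by the inner induction. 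The technical heart is thus keeping straight which $\mx{b^{(i,\gamma)}}$ represents a given leading vertex in $\mathbb{B}_{p^{\beta-1}}$ and controlling the support decomposition $V^{(i,\gamma)}=\bigsqcup_\ell V^{(j_\ell,\beta)}$ together with the inequality $j_\ell>i$; everything else is bookkeeping around Lemma \ref{pull back to beta} and Theorem \ref{minimum gen set crit}.
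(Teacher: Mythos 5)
Your proof is correct, and while it shares the paper's overall skeleton (induction on $\beta$, base case Theorem \ref{mod p algorithm}, minimality via Theorem \ref{minimum gen set crit} and the divisibility chain $1\mid p\mid\cdots\mid p^{\beta-1}$), your spanning argument is organized genuinely differently. The paper takes an arbitrary spline $\mx{p}$ and builds an auxiliary spline $\mx{p'}$ by hand --- agreeing with $\mx{p}$ on components indexed by $I_{\beta-1}$ and constant on the new components --- so that $\mx{p'}$ is visibly a pullback from $\Z/p^{\beta-1}\Z$ and the difference $\mx{p}-\mx{p'}$ is supported \emph{only} on the new components $V^{(j,\beta)}$, $j\in I_\beta\setminus I_{\beta-1}$, hence is directly a combination of the newly added generators. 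You instead reduce mod $p^{\beta-1}$, subtract a lifted combination of $\overline{\mathbb{B}}_{p^{\beta-1}}$, and are left with a remainder divisible by $p^{\beta-1}$ that may be supported on \emph{all} components of $(G,\alpha_\beta)$; this forces you to prove the extra Claim that $\mx{b^{(i,\beta)}}$ lies in the span even for $i\in I_{\beta-1}$, which you do correctly via the identity $p^{\beta-\gamma}\,\overline{\mx{b^{(i,\gamma)}}}=\mx{b^{(i,\beta)}}+\sum_\ell\mx{b^{(j_\ell,\beta)}}$ with $j_\ell>i$ and a descending induction on the leading index. What each buys: the paper's $\mx{p'}$ trick makes the remainder land exactly on the new generators and avoids your inner induction, at the cost of some unverified bookkeeping (e.g.\ that $\mx{p'}$ is itself a spline); your route is a more standard filtration argument whose extra Claim makes explicit the refinement structure of zero-connected components (the content of Lemma \ref{support lemma}) and is, if anything, more fully justified. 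Both are valid proofs of the theorem.
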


\begin{proof}
We give a proof by induction that $\mathbb{B}_{p^{\beta}}$ generates $[\Z/p^\beta\Z]_{G,\alpha_\beta}$. The base case that $\mathbb{B}_p$ generates the ring of splines $[\Z/p\Z]_{G,\alpha_1}$ was proven in Theorem \ref{mod p algorithm}. The inductive hypothesis is that the set $\mathbb{B}_{p^{\beta-1}}$ generates $[\Z/p^{\beta-1}\Z]_{G,\alpha_{\beta-1}}$. 

Let $\mx{p}$ be a spline in $[\Z/p^{\beta}\Z]_{G,\alpha_{\beta}}$. Define a spline $\mx{p'}$ as follows
 
 \[
\mx{p'}_v = 
\begin{cases}
\mx{p}_v & \text{  if  }  v \in V^{(i,\beta)} \text{  for  } i \in I_{\beta-1}\\
\mx{p}_u & \text{  if  }  v\in  V^{(i,\beta)} \text{  for  } i \in I_\beta \setminus I_{\beta-1} \\
\end{cases}
\]

where $v,u \in V^{(i,\beta)}$ for $i\in I_{\beta-1}$. That is, $\mx{p'}$ and $\mx{p}$ agree on all vertices $v$ that are in zero-connected components of $(G,\alpha_\beta)$ indexed by elements of $I_{\beta-1}$. And $\mx{p'}$ is constant on each zero-connected component in $(G,\alpha_\beta)$ not indexed by an element in $I_{\beta-1}$.

We first show that $\mx{p'}$ is generated by $\overline{\mathbb{B}}_{p^{\beta-1}}$. Since $\mx{p'}$ is constant on each zero-connected component in $(G,\alpha_\beta)$ that is not indexed by an element in $I_{\beta-1}$,  $\mx{p'}$ is the pull back of a spline in $[\Z/p^{\beta-1}\Z]_{G,\alpha_{\beta-1}}$. Let $\rho^*: [\Z/p^{\beta}\Z]_{G,\alpha_{\beta}} \to [\Z/p^{\beta-1}\Z]_{G,\alpha_{\beta-1}}$ be the induced quotient map on the ring of splines. Then $\rho^*(\mx{p'})$ is generated by $\mathbb{B}_{p^{\beta-1}}$ by our induction hypothesis. Therefore the pull back $\mx{p'}$ is generated by the pull back set $\overline{\mathbb{B}}_{p^{\beta-1}}$.


Let $V^{(j,\beta)}$ be a zero-connected component of $(G,\alpha_{\beta})$ for which $j\in I_\beta \setminus I_{\beta-1}$. Then $\mx{p}$ and $\mx{p'}$ do not (necessarily) agree on $v\in V^{(j,\beta)}$ and at these vertices they differ by a multiple of $p^{\beta-1}$ by Remark \ref{edge btwn zero cc}. The vertices $v\in V^{(j,\beta)}$ for $j\in I_\beta \setminus I_{\beta-1}$ are associated with $\mx{b^{(j,\beta)}}$ defined in Proposition \ref{new spline}. By construction the non-zero entries of $\mx{b^{(j,\beta)}}$ are $p^{\beta-1}$, so we have


\[
\mx{p}= \mx{p'} + \sum_{V^{(j,\beta)}} c_j\mx{b^{(j,\beta)}} 
\]

for the appropriate constants $c_j$, so $\mathbb{B}_{p^\beta}$ generates $[\Z/p^{\beta}\Z]_{G,\alpha_{\beta}}$. 

Finally the set $\mathbb{B}_{p^\beta}$ is a flow-up set of splines by construction and a minimum generating set because it meets the criteria given by Bowden and Tymoczko \cite{BT} in Theorem \ref{minimum gen set crit}.
\end{proof}

\begin{example} In this example we illustrate Algorithm \ref{mod p^k algorithm}. We begin with the graph in Figure \ref{alg ex 8} whose edges are labeled with elements in $\Z/8\Z$. Then we quotient out the edge labels by $4\Z$ and $2\Z$ to get the graphs in Figure \ref{alg ex 4} and Figure \ref{alg ex 2} respectively. 

To construct a flow-up minimum generating set, we first consider the graph in Figure \ref{alg ex 2} whose only zero-connected component is $V^{(1,1)}$ with index set $I_1=1$. The spline $(1\,1\,1\,1)^T$ generates all splines on this graph. Next we consider the graph in Figure \ref{alg ex 4} whose zero-connected components are $V^{(1,2)}$ and $V^{(2,2)}$ with index set $I_2 = 1,2$. Over $\Z/4\Z$, the zero-connected component with index not in $I_1$ is $V^{(2,2)}$ to which we assign the spline $(2\,2\,2\,0)^T$. The pull back of $(1\,1\,1\,1)^T$ with $(2\,2\,2\,0)^T$ generates all splines on the graph in Figure \ref{alg ex 4}. Finally we consider the graph in Figure \ref{alg ex 8} whose zero-connected components are $V^{(1,3)}$, $V^{(2,3)}$, $V^{(3,3)}$ and $V^{(4,3)}$ with index set $I_3=1,2,3,4$. Over $\Z/8\Z$, the zero-connected components with index not in $I_2$ are $V^{(3,3)}$ and $V^{(4,3)}$ to which we assign the splines $(0\,4\,0\,0)^T$ and $(4\,0\,0\,0)^T$ respectively. Together with the pull back of the generators for splines on the graph in Figure \ref{alg ex 4} these generate all splines on the graph in Figure \ref{alg ex 8}. 

\begin{figure}[h!]

\begin{minipage}[c]{0.45\linewidth}
\centering
\begin{tikzpicture}[shorten >=1pt,auto,node distance=1.7cm,
  thick,main node/.style={rectangle,draw,font=\sffamily\bfseries}]
  
 \node[main node] (1) {$v_1$};
 \node[main node] (2) [above right of = 1] {$v_2$};
 \node[main node] (3) [above left of = 2] {$v_3$};
 \node[main node] (4) [below left of =3] {$v_4$};

  \path[every node/.style={color=black, font=\sffamily\small}]
    
    (2) edge node {\color{red}$2$} (1)
    (1) edge node {\color{red}$2$} (4)
    (4) edge node {\color{red}$2$} (3)
    	edge node {\color{red}$4$} (2)
    (3) edge node {\color{red}$4$} (2);

\end{tikzpicture}
\caption{Graph over $\Z/8\Z$}\label{alg ex 8}

\end{minipage}
\quad\quad
\begin{minipage}[c]{0.3\linewidth}
\centering

$\left\{ \begin{pmatrix} 4\\0\\0\\0 \end{pmatrix},\begin{pmatrix} 0\\4\\0\\0 \end{pmatrix}, \begin{pmatrix} 2\\2\\2\\0 \end{pmatrix}, \begin{pmatrix} 1\\1\\1\\1 \end{pmatrix} \right\}$

\end{minipage}
\vspace{5mm}

\begin{minipage}[c]{0.45\linewidth}
\centering
\begin{tikzpicture}[shorten >=1pt,auto,node distance=1.7cm,
  thick,main node/.style={rectangle,draw,font=\sffamily\bfseries}]
  
 \node[main node] (1) {$v_1$};
 \node[main node] (2) [above right of = 1] {$v_2$};
 \node[main node] (3) [above left of = 2] {$v_3$};
 \node[main node] (4) [below left of =3] {$v_4$};

  \path[every node/.style={color=black, font=\sffamily\small}]
    
    (2) edge node {\color{red}$2$} (1)
    (1) edge node {\color{red}$2$} (4)
    (4) edge node {\color{red}$2$} (3)
    	edge node {\color{red}$0$} (2)
    (3) edge node {\color{red}$0$} (2);

\end{tikzpicture}

\caption{Graph over $\Z/4\Z$}\label{alg ex 4}
\end{minipage}
\quad\quad
\begin{minipage}[c]{0.3\linewidth}
\centering

$\left\{ \begin{pmatrix} 2\\2\\2\\0 \end{pmatrix}, \begin{pmatrix} 1\\1\\1\\1 \end{pmatrix} \right\}$

\end{minipage}
\vspace{5mm}

\begin{minipage}[c]{0.45\linewidth}
\centering
\begin{tikzpicture}[shorten >=1pt,auto,node distance=1.7cm,
  thick,main node/.style={rectangle,draw,font=\sffamily\bfseries}]
  
 \node[main node] (1) {$v_1$};
 \node[main node] (2) [above right of = 1] {$v_2$};
 \node[main node] (3) [above left of = 2] {$v_3$};
 \node[main node] (4) [below left of =3] {$v_4$};

  \path[every node/.style={color=black, font=\sffamily\small}]
    
    (2) edge node {\color{red}$0$} (1)
    (1) edge node {\color{red}$0$} (4)
    (4) edge node {\color{red}$0$} (3)
    	edge node {\color{red}$0$} (2)
    (3) edge node {\color{red}$0$} (2);

\end{tikzpicture}
\caption{Graph over $\Z/2\Z$}\label{alg ex 2}
\end{minipage}
\quad\quad
\begin{minipage}[c]{0.3\linewidth}
\centering

$\left\{ \begin{pmatrix} 1\\1\\1\\1 \end{pmatrix} \right\}$

\end{minipage}

\end{figure}

\end{example}

\subsection{Minimum generating sets mod $m$}\label{subsection mod m algorithm}

To produce minimum generating sets for splines over $\Z/m\Z$, we combine our algorithm in Section \ref{mod p^k sect} with a structure theorem for splines over $\Z/m\Z$ given by Bowden and Tymoczko \cite{BT}. 

Bowden and Tymoczko \cite{BT} give an algorithm to produce a minimum generating set for splines over $\Z/m\Z$ from the minimum generating sets for splines over $\Z/p_i^{e_i}\Z$ $i=1,2,\dots,t$ where $p_1^{e_1}\dots p_t^{e_t}$ is the primary decomposition of $m$. Specifically Bowden and Tymoczko prove the following rings of splines are isomorphic.

\begin{theorem}[Bowden-Tymoczko]\label{primary decomp}
Suppose that $m=p_1^{e_1}p_2^{e_2}\cdots p_t^{e_t}$ is the primary decomposition of $m$. For each $i=1,2, \dots, t$ let $\rho_{e_i} : \Z/m\Z \to \Z/p_i^{e_i}\Z$. Let $G$ be a graph with edge-labeling $\alpha$ over the integers modulo $m$, and for each $e_i$ let $\alpha_{e_i}$ denote the edge-labeling function that sends each edge $uv$ to the ideal $\alpha_{e_i}(uv) = \rho_{e_i}(\alpha(uv))$. Then 

\[
[\Z/m\Z]_{G,\alpha} \cong \bigoplus_{i=1}^t [\Z/p_i^{e_i}\Z]_{G,\alpha_{e_i}}
\]

via the induced map $\rho^*:[\Z/m\Z]_{G,\alpha} \to \bigoplus_{i=1}^t [\Z/p_i^{e_i}\Z]_{G,\alpha_{e_i}}$ defined by $(\rho^*(\mx{p}))_v = \bigoplus_{i=1}^t \rho_{e_i}(\mx{p}_v)$.
\end{theorem}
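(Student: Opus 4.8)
The plan is to obtain the whole statement from the Chinese Remainder Theorem applied vertex by vertex. Since the $p_i^{e_i}$ are pairwise coprime with product $m$, the map $\phi\colon \Z/m\Z \to \bigoplus_{i=1}^t \Z/p_i^{e_i}\Z$ given by $\phi(x) = \bigoplus_{i=1}^t \rho_{e_i}(x)$ is a ring isomorphism. Applying $\phi$ independently at each vertex yields a ring isomorphism $\Phi\colon (\Z/m\Z)^{|V|} \to \bigoplus_{i=1}^t (\Z/p_i^{e_i}\Z)^{|V|}$ of the ambient rings, where both sides carry the pointwise (vertexwise) operations that also define the ring structure on splines; by construction $\rho^*$ is precisely the restriction of $\Phi$. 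So it suffices to prove that $\Phi$ maps the subring $[\Z/m\Z]_{G,\alpha}$ bijectively onto the subring $\bigoplus_{i=1}^t [\Z/p_i^{e_i}\Z]_{G,\alpha_{e_i}}$: the resulting restriction is then automatically a ring isomorphism because $\Phi$ already is one, so no ring-theoretic verification is needed beyond the bijection.

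The key observation is that $\phi$ matches up the edge conditions. For an edge $uv$, since $\phi$ is a ring isomorphism it carries the principal ideal $\alpha(uv) = \langle g\rangle \subseteq \Z/m\Z$ onto the ideal $\langle \phi(g)\rangle = \bigoplus_{i=1}^t \langle \rho_{e_i}(g)\rangle = \bigoplus_{i=1}^t \alpha_{e_i}(uv)$ of the product ring (here $\langle \rho_{e_i}(g)\rangle = \rho_{e_i}(\langle g\rangle) = \alpha_{e_i}(uv)$ because $\rho_{e_i}$ is surjective). Consequently, for any $x \in \Z/m\Z$,
\[
x \in \alpha(uv) \iff \phi(x) \in \bigoplus_{i=1}^t \alpha_{e_i}(uv) \iff \rho_{e_i}(x) \in \alpha_{e_i}(uv) \text{ for all } i.
\]
Applying this with $x = \mx{p}_u - \mx{p}_v$, and using that each $\rho_{e_i}$ is a ring homomorphism so that $\rho_{e_i}(\mx{p}_u - \mx{p}_v) = \rho_{e_i}(\mx{p}_u) - \rho_{e_i}(\mx{p}_v)$, shows at once that $\Phi$ sends splines over $\Z/m\Z$ to tuples of splines, and conversely that a vertex-labeling $\mx{p}$ whose $\Phi$-image is a tuple of splines is itself a spline over $\Z/m\Z$. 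Injectivity of $\rho^*$ is inherited from $\Phi$ (equivalently, from $\bigcap_i p_i^{e_i}\Z = m\Z$). For surjectivity, given a tuple $(\mx{p^{(i)}})_{i=1}^t$ of splines, set $\mx{p}_v = \phi^{-1}\bigl(\bigoplus_i \mx{p^{(i)}}_v\bigr)$; then $\Phi(\mx{p}) = \bigoplus_i \mx{p^{(i)}}$, and the converse just noted gives $\mx{p} \in [\Z/m\Z]_{G,\alpha}$. This establishes the bijection and hence the isomorphism.

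The only step that is not pure bookkeeping is the displayed equivalence, i.e.\ that an ideal of $\Z/m\Z$ is recovered from its images in the factors $\Z/p_i^{e_i}\Z$; this is the standard fact that ideals of a finite product ring are products of ideals of the factors. If one prefers to avoid that fact, it can be checked by hand: write $\gcd(g,p_i^{e_i}) = p_i^{a_i}$, so $a_i \le e_i$ and, by unique factorization together with $g \mid m$, one has $g = \prod_i p_i^{a_i}$; then $\rho_{e_i}(x) \in \langle p_i^{a_i}\rangle$ for every $i$ means that $p_i^{a_i}$ divides any integer representative $d$ of $x$ for every $i$, and since the $p_i$ are distinct primes this forces $g \mid d$, i.e.\ $x \in \langle g\rangle$. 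I do not anticipate any obstacle beyond this elementary point, and everything else is routine once $\Phi$ is recognized as the CRT isomorphism acting coordinatewise.
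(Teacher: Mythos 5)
Your argument is correct. Note, however, that the paper does not prove this statement at all: it is quoted as a known result of Bowden--Tymoczko and cited to \cite{BT}, so there is no in-paper proof to compare against. Your route --- apply the Chinese Remainder Theorem vertexwise to get an isomorphism of the ambient rings $(\Z/m\Z)^{|V|}\cong\bigoplus_i(\Z/p_i^{e_i}\Z)^{|V|}$, then check that the edge conditions correspond because $\phi$ carries the ideal $\langle g\rangle$ onto $\bigoplus_i\langle\rho_{e_i}(g)\rangle$ --- is the standard and expected argument, and the one place where real content lives (that membership in $\langle g\rangle$ is detected by membership in each $\langle\rho_{e_i}(g)\rangle$) is handled correctly, including the elementary verification via $g=\prod_i p_i^{a_i}$ under the paper's convention that $g$ is the minimal positive generator dividing $m$. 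No gaps.
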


The uniqueness of the minimum generating set modulo $m$ pulled back from the minimum generating sets modulo the primary decomposition of $m$ is given by the Chinese Remainder Theorem.

\begin{algorithm}\label{mod m algorithm}
Let $G$ be a graph with edge-labeling function $\alpha:E \to \{\text{ideals of } \Z/m\Z\}$. Suppose that $m=p_1^{e_1}p_2^{e_2}\cdots p_t^{e_t}$ is the primary decomposition of $m$. For each $i=1,2, \dots, t$ let $\rho_{e_i} : \Z/m\Z \to \Z/p_i^{e_i}\Z$ and for each $i$ let $\alpha_{e_i}$ denote the edge-labeling function that sends the edge $uv$ to the ideal $\alpha_{e_i}(uv) = \rho_{e_i}(\alpha(uv))$. 

We define a set of splines as follows

\begin{enumerate}
\item Use Algorithm \ref{mod p^k algorithm} to determine the minimum generating set $\mathbb{B}_{p_i^{e_i}}$ of the ring of splines $[\Z/p_i^{e_i}\Z]_{G,\alpha_i}$ for each $i=1,2,\dots,t$.

\item\label{project splines} Select a spline $\mx{b}$ from each $\mathbb{B}_{p_i^{e_i}}$ and find the unique spline in $[\Z/m\Z]_{G,\alpha}$ that projects onto $\mx{b} \in \mathbb{B}_{p_i^{e_i}}$ for each $i=1,2,\dots, t$. Use the zero spline if all others in some $\mathbb{B}_{p_i^{e_i}}$ have previously been selected.

\item Repeat Step \ref{project splines} using each $\mx{b} \in \mathbb{B}_{p_i^{e_i}}$ only once for each $i=1,2,\dots,t$ and using $\mx{0}\in \mathbb{B}_{p_i^{e_i}}$ when necessary. 
\end{enumerate}

Let $\mathbb{B}_m$ denote the set of splines in $[\Z/m\Z]_{G,\alpha}$ produced by the above steps.
\end{algorithm}

\begin{theorem}
Let $G$ be a graph with edge-labeling function $\alpha:E \to \{\text{ideals of } \Z/m\Z\}$. Suppose that $m=p_1^{e_1}p_2^{e_2}\cdots p_t^{e_t}$ is the primary decomposition of $m$. For each $i=1,2, \dots, t$ let $\rho_{e_i} : \Z/m\Z \to \Z/p_i^{e_i}\Z$ and for each $i$ let $\alpha_{e_i}$ denote the edge-labeling function that sends the edge $uv$ to the ideal $\alpha_{e_i}(uv) = \rho_{e_i}(\alpha(uv))$. 

\begin{enumerate}
\item \label{claim 1} The set of splines $\mathbb{B}_m$ produced by Algorithm \ref{mod m algorithm} is a minimum generating set for $[\Z/m\Z]_{G,\alpha}$ when arbitrary splines $\mx{b} \in \mathbb{B}_{p_i^{e_i}}$ for each $i=1,2,\dots,t$ are selected in Step \ref{project splines} of Algorithm \ref{mod m algorithm}. 

\item\label{claim 2} The set of splines $\mathbb{B}_m$ produced by Algorithm \ref{mod m algorithm} is a flow-up minimum generating set for $[\Z/m\Z]_{G,\alpha}$ when splines $\mx{b} \in \mathbb{B}_{p_i^{e_i}}$ are selected as follows in Step \ref{project splines} of Algorithm \ref{mod m algorithm} 

\begin{itemize} 
\item\label{same leading zeros} Order the elements of $\mathbb{B}_{p_i^{e_i}}$ according to number of leading zeros from fewest to most for each $i=1,\dots,t$. Select a spline $\mx{b}$ from each $\mathbb{B}_{p_i^{e_i}}$ in this order.
%

\end{itemize}
\end{enumerate}
\end{theorem}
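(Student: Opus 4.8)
The plan is to reduce everything to the prime-power case via the Chinese Remainder Theorem isomorphism of Theorem \ref{primary decomp}, using that each $\mathbb{B}_{p_i^{e_i}}$ is already known to be a flow-up minimum generating set (Theorem \ref{mod p^k theorem}). Write $n_i=|\mathbb{B}_{p_i^{e_i}}|$, which by Theorem \ref{mod p^k theorem} is the number of zero-connected components of $(G,\alpha_{e_i})$, and set $N=\max_i n_i$. List $\mathbb{B}_{p_i^{e_i}}=\{\mx{b}_{i,1},\dots,\mx{b}_{i,n_i}\}$ in the order Step \ref{project splines} visits its elements, and put $\mx{b}_{i,s}=\mx{0}$ for $s>n_i$. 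By Theorem \ref{primary decomp} the induced map $\rho^*=\bigoplus_i\rho_{e_i}^*$ is a ring isomorphism, so for each $s\le N$ there is a unique spline $\mx{B}_s\in[\Z/m\Z]_{G,\alpha}$ with $\rho_{e_i}^*(\mx{B}_s)=\mx{b}_{i,s}$ for every $i$; these $\mx{B}_s$ are precisely the elements of $\mathbb{B}_m$. Projecting onto a coordinate $i_0$ with $n_{i_0}=N$ shows the $\mx{B}_s$ are pairwise distinct and none is $\mx{0}$, so $|\mathbb{B}_m|=N$.

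For claim \ref{claim 1}, I would first show $\mathbb{B}_m$ spans: given a spline $\mx{p}\in[\Z/m\Z]_{G,\alpha}$, for each $i$ write $\rho_{e_i}^*(\mx{p})=\sum_s c_{i,s}\,\mx{b}_{i,s}$ with $c_{i,s}\in\Z/p_i^{e_i}\Z$ (possible since $\mathbb{B}_{p_i^{e_i}}$ generates), taking $c_{i,s}=0$ for $s>n_i$. For each $s\le N$ the Chinese Remainder Theorem produces a unique $d_s\in\Z/m\Z$ with $\rho_{e_i}(d_s)=c_{i,s}$ for all $i$. Then $\rho_{e_i}^*\!\left(\sum_s d_s\mx{B}_s\right)=\sum_s c_{i,s}\mx{b}_{i,s}=\rho_{e_i}^*(\mx{p})$ for every $i$, and injectivity of $\rho^*$ forces $\mx{p}=\sum_s d_s\mx{B}_s$. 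For minimality, each $\rho_{e_i}^*$ is surjective (it is $\rho^*$ followed by a coordinate projection), so any spanning set of $[\Z/m\Z]_{G,\alpha}$ maps onto a spanning set of $[\Z/p_i^{e_i}\Z]_{G,\alpha_{e_i}}$ and therefore has at least $n_i$ elements; taking the maximum over $i$ gives $\ge N=|\mathbb{B}_m|$. Neither argument uses the order of selection, so claim \ref{claim 1} holds for every choice made in Step \ref{project splines}.

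For claim \ref{claim 2}, by claim \ref{claim 1} it remains only to check that ordering each $\mathbb{B}_{p_i^{e_i}}$ by number of leading zeros makes $\mathbb{B}_m$ a set of flow-up splines with distinct, indeed strictly increasing, leading positions. The structural input from Algorithm \ref{mod p^k algorithm} is that the elements of $\mathbb{B}_{p_i^{e_i}}$ are in bijection with the index set of zero-connected components of $(G,\alpha_{e_i})$, each element $\mx{b}^{(j,\beta)}$ being supported on $V^{(j,\beta)}$ with smallest-index vertex $v_j$; hence within $\mathbb{B}_{p_i^{e_i}}$ all leading positions $\ell_{i,s}$ are distinct, and ordering by leading zeros yields $\ell_{i,1}<\dots<\ell_{i,n_i}$ with $\ell_{i,1}=1$ (since $v_1$ always lies in some zero-connected component). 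Put $\ell_{i,s}=|V|+1$ for $s>n_i$. Because $\mx{B}_s$ vanishes at a vertex $v_t$ exactly when every $\mx{b}_{i,s}$ does, the leading position of $\mx{B}_s$ equals $L(s):=\min_i\ell_{i,s}$; a short argument --- using that some $i_0$ has $n_{i_0}=N$, so $\ell_{i_0,s}\le|V|$ and $\ell_{i_0,s}<\ell_{i_0,s+1}$ for $s<N$ --- shows $1=L(1)<L(2)<\dots<L(N)$. Hence $\mathbb{B}_m$ is a minimum generating set of flow-up splines with strictly increasing leading positions, beginning with a spline having no leading zeros: a flow-up minimum generating set.

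I expect the main obstacle to be the bookkeeping in claim \ref{claim 2}: carefully justifying that each $\mathbb{B}_{p_i^{e_i}}$ has exactly one generator per zero-connected-component index (so its internal leading positions are distinct), handling the zero-padding when $n_i<N$ without breaking strict monotonicity of $L$, and confirming that the minimal-coset-representative identifications used throughout (both in Algorithm \ref{mod p^k algorithm} and when forming CRT pullbacks) never change which entries of a vertex-labeling vanish. By contrast, once the isomorphism of Theorem \ref{primary decomp} is in hand, the spanning and minimality arguments for claim \ref{claim 1} are routine.
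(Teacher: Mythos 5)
Your proposal is correct and follows essentially the same route as the paper: reduce to the prime-power case via the isomorphism of Theorem \ref{primary decomp}, use minimality of each $\mathbb{B}_{p_i^{e_i}}$ (Theorem \ref{mod p^k theorem}) to get minimality of $\mathbb{B}_m$, and verify the flow-up property by tracking leading positions of the CRT pullbacks. You simply supply more detail than the paper does, notably the projection argument for the lower bound $|\mathbb{B}_m|\geq N$ and the monotonicity of the leading positions $L(s)$.
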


\begin{proof}

The set $\mathbb{B}_m$ generates $[\Z/m\Z]_{G,\alpha}$ by Theorem \ref{primary decomp}. Each set $\mathbb{B}_{p_i^{e_i}}$ is minimum by Theorem \ref{mod p^k theorem} so the generating set $\mathbb{B}_m$ is also minimum. This proves Claim \ref{claim 1}.

To prove Claim \ref{claim 2} we need to check that the pull back to the $[\Z/m\Z]_{G,\alpha}$ is a flow-up set of generators. Let $\mx{f_r}$ be the unique spline in $[\Z/m\Z]_{G,\alpha}$ that projects onto the $r^{th}$ basis vector (or zero vector) in each ordered set $\mathbb{B}_{p_i^{e_i}}$ of generators.  Each $\mx{f_r}$ selected in Claim \ref{claim 2} is a flow-up spline with leading entry at some vertex, say $v_r$. By construction Claim \ref{claim 2} produces only one spline $\mx{f_r} \in[\Z/m\Z]_{G,\alpha}$ with leading entry at $v_r$. Therefore the set $\mathbb{B}_m$ in Claim \ref{claim 2} is a flow-up minimum generating set.
\end{proof}

\subsection{An algorithm to produce a basis over the integers}

To conclude this section we extend Algorithm \ref{mod m algorithm} to produce a basis for a ring of splines $\Z_{G,\alpha}$. We carefully pick the modulus $m$ and utilize Algorithm \ref{mod m algorithm} over $\Z/m\Z$. This produces a basis rather than just a minimum generating set because the $\Z$-module of splines $\Z_{G,\alpha}$ is free. Before presenting the algorithm we discuss the motivation for and prove the necessary facts about each step.
  
A pivotal step of the algorithm is the selection of a particular modulus $m$ for which the ring of splines over $\Z/m\Z$ pulls back nicely to splines over $\Z$. To this end we use Lemma \ref{pull back to beta} in the proof of Theorem \ref{integer theorem}. 

In order to get a basis for splines over the integers, we use the following characterization of flow-up bases for splines over the integers due to Bowden, Hagen, King, and Reinders \cite{BowdenHKR}.

\begin{theorem}[Bowden-Hagen-King-Reinders] \label{min leading elt}
Let $G$ be a graph on $|V|=n$ vertices with edge-labeling function $\alpha: E \to \{ \text{ideals of } \Z\}$. The following are equivalent:

\begin{itemize}
\item The set $\{ \mx{b^{(1)}}, \dots, \mx{b^{(n)}} \}$ forms a flow-up basis for splines on $\Z_{G,\alpha}$.

\item For each flow-up spline $\mx{f^{(i)}}=(0, \dots, \mx{f^{(i)}}_{v_i} , \dots, \mx{f^{(i)}}_{v_n})^T$ in $\Z_{G,\alpha}$ the entry $\mx{f^{(i)}}_{v_i}$ is an integer multiple of the entry $\mx{b^{(i)}}_{v_i}$.
\end{itemize}
\end{theorem}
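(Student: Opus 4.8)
The plan is to recognize that, coordinate by coordinate, everything is governed by a single ideal of $\Z$. For each index $i$ let $M_i\subseteq\Z_{G,\alpha}$ be the submodule of flow-up splines $\mx{f}$ with $\mx{f}_{v_t}=0$ for all $t<i$, and let $\mathcal{A}_i=\{\mx{f}_{v_i} : \mx{f}\in M_i\}$ be the set of attainable leading entries at $v_i$. Since evaluation at $v_i$ is a $\Z$-linear map $M_i\to\Z$, its image $\mathcal{A}_i$ is an ideal, say $\mathcal{A}_i=\langle d_i\rangle$. First I would check $d_i\neq 0$: the vertex-labeling that is $0$ on $v_1,\dots,v_{i-1}$ and equal to $N:=\prod_{uv\in E}\alpha(uv)$ on $v_i,\dots,v_n$ is a spline lying in $M_i$ (every edge label divides $N$), so $N\in\mathcal{A}_i$; this uses that the edge labels are nonzero, the standing convention here and automatic for the graphs produced by pull-back in Section~\ref{section algorithm mod m}. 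Now each candidate element $\mx{b^{(i)}}$ is itself in $M_i$, so $\mx{b^{(i)}}_{v_i}\in\mathcal{A}_i$; hence the condition in the second bullet — that $\mx{b^{(i)}}_{v_i}$ divide $\mx{f^{(i)}}_{v_i}$ for every $\mx{f^{(i)}}\in M_i$ — is exactly the statement $\mathcal{A}_i\subseteq\langle\mx{b^{(i)}}_{v_i}\rangle$, i.e.\ that $\mx{b^{(i)}}_{v_i}$ generates $\mathcal{A}_i$. So the theorem reduces to: $\{\mx{b^{(1)}},\dots,\mx{b^{(n)}}\}$ is a flow-up basis $\iff$ $\mx{b^{(i)}}_{v_i}$ generates $\mathcal{A}_i$ for every $i$.

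For the forward implication I would use the triangular structure. If $\{\mx{b^{(i)}}\}$ is a flow-up basis, then the matrix with rows $\mx{b^{(1)}},\dots,\mx{b^{(n)}}$ and columns indexed by the vertices in the given order is upper triangular with diagonal entries $\mx{b^{(i)}}_{v_i}$, so linear independence forces $\det\neq 0$, hence every $\mx{b^{(i)}}_{v_i}\neq 0$. Given $\mx{f^{(i)}}\in M_i$, expand $\mx{f^{(i)}}=\sum_j c_j\mx{b^{(j)}}$ using that the set is a basis, and read off coordinates $v_1,v_2,\dots$ in turn: at $v_1$ the identity becomes $c_1\mx{b^{(1)}}_{v_1}=0$ (all other $\mx{b^{(j)}}$ vanish there), so $c_1=0$; iterating gives $c_1=\dots=c_{i-1}=0$; then the $v_i$-coordinate gives $\mx{f^{(i)}}_{v_i}=c_i\mx{b^{(i)}}_{v_i}$ because $\mx{b^{(j)}}_{v_i}=0$ for $j>i$. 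Thus $\mx{b^{(i)}}_{v_i}\mid\mx{f^{(i)}}_{v_i}$, which is what we want.

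For the converse, assume each $\mx{b^{(i)}}_{v_i}$ generates $\mathcal{A}_i$; together with $d_i\neq 0$ this gives $\mx{b^{(i)}}_{v_i}\neq 0$. Linear independence is the same triangular computation: in a relation $\sum_i c_i\mx{b^{(i)}}=\mx{0}$, if $k$ is least with $c_k\neq 0$ then the $v_k$-coordinate reads $c_k\mx{b^{(k)}}_{v_k}=0$, forcing $c_k=0$. For spanning I would clear coordinates from the top: given any spline $\mx{g}$, its $v_1$-entry lies in $\mathcal{A}_1=\langle\mx{b^{(1)}}_{v_1}\rangle$, so $\mx{g}-c_1\mx{b^{(1)}}$ vanishes at $v_1$ for a suitable $c_1\in\Z$; this difference lies in $M_2$, so its $v_2$-entry lies in $\mathcal{A}_2$ and we subtract $c_2\mx{b^{(2)}}$; after $n$ steps we reach $\mx{0}$, so $\mx{g}=\sum_i c_i\mx{b^{(i)}}$. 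Hence $\{\mx{b^{(i)}}\}$ is a basis, and it is flow-up by hypothesis.

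The one genuinely substantive point — and the step I expect to be the main obstacle — is the reduction in the first paragraph: seeing that the attainable leading entries at $v_i$ form an ideal and that the divisibility condition is precisely ``$\mx{b^{(i)}}_{v_i}$ generates that ideal.'' Once that is in place, both directions are routine triangular/Gaussian-elimination bookkeeping. The one hypothesis-sensitive spot inside it is $d_i\neq 0$: if a zero-path joined $v_i$ to an earlier vertex one would have $\mathcal{A}_i=0$, the right-hand condition would become vacuous while no flow-up basis could exist, and the equivalence would fail — so I would be careful to invoke the nonzero-edge-label convention there.
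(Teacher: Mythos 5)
Your proof is correct, but there is nothing in the paper to compare it against: Theorem \ref{min leading elt} is stated as an imported result of Bowden--Hagen--King--Reinders \cite{BowdenHKR} and is used as a black box, with no proof given here. Your argument --- observing that the attainable leading entries at $v_i$ form an ideal $\mathcal{A}_i=\langle d_i\rangle$ of $\Z$, that the divisibility condition in the second bullet says exactly that $\mx{b^{(i)}}_{v_i}$ generates $\mathcal{A}_i$, and then running triangular elimination in both directions --- is the standard proof of this fact and is essentially the argument in the cited source (and in Gilbert--Tymoczko--Viel for arbitrary graphs). Two details you handle correctly and that are worth keeping explicit: the product-of-edge-labels spline showing $d_i\neq 0$, and the caveat that the equivalence as literally stated requires nonzero edge labels (equivalently, that $\Z_{G,\alpha}$ have rank $n$), since otherwise the second condition can hold vacuously at a vertex where $\mathcal{A}_i=0$ while no $n$-element flow-up basis exists; that nonvanishing convention is indeed in force in \cite{BowdenHKR} and in the application made of the theorem in Theorem \ref{integer theorem}.
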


First we construct a minimal leading non-zero element for splines over integers modulo $m$. 

\begin{lemma}\label{minimize leading}
Let $G$ be a connected graph and let $[\Z/m\Z]_{G,\alpha_m}$ be the ring of splines over $G$ with edge-labeling function $\alpha_m: E \to \{ \text{ ideals of } \Z/m\Z \}$. Let $\{ \mx{b^{(1)}}, \dots, \mx{b^{(s)}} \}$ be the flow-up minimum generating set for $[\Z/m\Z]_{G,\alpha_m}$. Define the spline $\mx{{f}^{(i)}}  \in [\Z/m\Z]_{G,\alpha_m}$  by
\[
\mx{{f}^{(i)}} = x\cdot \mx{b^{(i)}} \mod m
\]

where $x$ is an element in $\Z/m\Z$ such that $x\cdot \mx{b^{(i)}}_{v_i}=gcd\left(\mx{b^{(i)}}_{v_i}\, ,\, m \right)$. Then $\mx{{f}^{(i)}}_{v_i}$ has smallest minimal coset representative in $\Z$ of all elements in $\langle \mx{b^{(i)}} \rangle$.
\end{lemma}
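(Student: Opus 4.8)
The plan is to show that every element of the cyclic $\Z/m\Z$-module $\langle \mx{b^{(i)}}\rangle$ has $v_i$-th coordinate equal to a multiple of $\gcd(\mx{b^{(i)}}_{v_i}, m)$, and conversely that this gcd is actually attained by $\mx{f^{(i)}}$; since among the nonzero elements of $\Z/m\Z$ the one with the smallest minimal coset representative in $\Z$ dividing it is exactly the gcd (more precisely, the subgroup of $\Z/m\Z$ generated by $\mx{b^{(i)}}_{v_i}$ is the cyclic subgroup generated by $\gcd(\mx{b^{(i)}}_{v_i}, m)$, whose minimal positive representative is that gcd), this will pin down $\mx{f^{(i)}}_{v_i}$ as having the smallest such representative.

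First I would recall the elementary number-theoretic fact that for $b \in \Z/m\Z$ the set $\{ x b \bmod m : x \in \Z/m\Z\}$ equals the set of multiples of $d := \gcd(b,m)$ in $\Z/m\Z$, and that by Bézout there exists $x$ with $x b \equiv d \pmod m$. Apply this with $b = \mx{b^{(i)}}_{v_i}$ to get the scalar $x$ in the statement; this guarantees $\mx{f^{(i)}} = x \cdot \mx{b^{(i)}}$ is well-defined as an element of $[\Z/m\Z]_{G,\alpha_m}$ (scalar multiples of splines are splines) and that its $v_i$-th entry has minimal coset representative $d$. Next I would observe that $\mx{f^{(i)}}$ is still a flow-up spline with the same leading vertex $v_i$: multiplying by a scalar cannot introduce nonzero entries before $v_i$, and the leading entry $d$ is nonzero since $d \mid m$ and $d \neq m$ when $\mx{b^{(i)}}_{v_i} \neq 0$ (and if $\mx{b^{(i)}}_{v_i}=0$ the statement is vacuous or trivial).

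Then for the minimality claim: any spline in $\langle \mx{b^{(i)}}\rangle$ is of the form $y \cdot \mx{b^{(i)}}$ for some $y \in \Z/m\Z$, so its $v_i$-th entry is $y\, \mx{b^{(i)}}_{v_i} \bmod m$, which by the fact above is a multiple of $d$; hence its minimal positive coset representative is either $0$ or at least $d$. Since $\mx{f^{(i)}}$ achieves exactly $d$ with a nonzero leading entry, it has the smallest minimal coset representative in $\Z$ among all elements of $\langle \mx{b^{(i)}}\rangle$ with nonzero $v_i$-entry, which is what the lemma asserts.

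I do not expect any serious obstacle here — the content is almost entirely the cyclic-group computation in $\Z/m\Z$. The one point requiring a little care is the interpretation of ``smallest minimal coset representative'': one should be explicit that we compare the representatives in $\{0,1,\dots,m-1\}$, that we are interested in nonzero leading entries (the zero spline trivially has leading entry $0$), and that ``all elements of $\langle \mx{b^{(i)}}\rangle$'' means all $\Z/m\Z$-scalar multiples of $\mx{b^{(i)}}$, so that the claim is really a statement about the single coordinate $v_i$ and reduces to $\langle \mx{b^{(i)}}_{v_i}\rangle = \langle d\rangle$ inside $\Z/m\Z$. Making that reduction precise is the whole proof.
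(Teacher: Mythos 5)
Your proposal is correct and follows essentially the same route as the paper: both arguments reduce the claim to the single coordinate $v_i$ and use the fact that the cyclic subgroup of $\Z/m\Z$ generated by $\mx{b^{(i)}}_{v_i}$ equals the subgroup generated by $\gcd\bigl(\mx{b^{(i)}}_{v_i}, m\bigr)$, whose minimal positive representative is that gcd. Your explicit handling of the zero-entry case and the Bézout existence of $x$ is slightly more careful than the paper's, but the underlying idea is identical.
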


\begin{proof}
This is essentially a result about subgroups of cyclic groups. Recall that for a finite cyclic group of order $m$, there is exactly one subgroup of order $m_i$ for each divisor $m_i$ of $m$. The smallest generator of the subgroup is $\langle d \rangle$  is the greatest common divisor of $m$ and $d$. Applying this to the cyclic subgroup of $\Z/m\Z$ generated by the spline entry $\mx{b^{(i)}}_{v_i}$ we conclude the minimal non-zero leading entry $v_i$ of a splines in $\langle \mx{b^{(i)}} \rangle$ is $gcd\left(\mx{b^{(i)}}_{v_i}\, ,\, m \right)$. Multiplying $\mx{b^{(i)}}$ by $x\in \Z/m\Z$ for which $x\cdot \mx{b^{(i)}}=gcd\left(\mx{b^{(i)}}_{v_i}\, ,\, m \right)$ yields a spline $\mx{{f}^{(i)}}$ whose leading element has smallest minimal coset representative in $\Z$ of all splines in $\langle \mx{b^{(i)}} \rangle$. 
\end{proof}

The following algorithm to produces a set of integer splines uses Algorithm \ref{mod p^k algorithm} and its extension to $\Z/m\Z$. Theorem \ref{integer theorem} employs the information given in Lemma \ref{pull back to beta} and Lemma \ref{minimize leading} to prove that this algorithm produces a basis for splines over the integers. 

\begin{algorithm}\label{integer algorithm}
Let $G$ be a graph with on $|V|=n$ vertices $\alpha: E \to \{ \text{ ideals of } \Z\}$. Let $\rho_m: \Z \to \Z/m\Z$ be the standard quotient map and let $\alpha_m$ denote the edge-labeling function that sends each edge $uv$ to the ideal $\alpha_m(uv) = \rho_m(\alpha(uv))$. 

\begin{enumerate}
\item Fix the modulus $m = lcm( \ell_1, \ell_2, \dots, \ell_t)\cdot p_1$ where $\ell_1, \ell_2, \dots, \ell_t$ are the smallest generators of the edge-label ideals $\alpha(uv)$ and $p_1$ is the smallest factor of the product $\ell_1 \ell_2 \dots \ell_t$. 

\item Use Algorithm \ref{mod m algorithm} to determine $\mathbb{B}_m$ a flow-up minimum generating set of $[\Z/m\Z]_{G,\alpha_m}$.

\item \label{minimize spline} For each $i=1,2,\dots,n$ and $\mx{b^{(i)}}$ in $\mathbb{B}_m$ compute the spline $\mx{{f}^{(i)}}=x\cdot \mx{b^{(i)}} \mod m$ where $x$ is the element in $\Z/m\Z$ such that $x\cdot \mx{b^{(i)}}_{v_i}=gcd\left(\mx{b^{(i)}}_{v_i}\, ,\, m \right)$.

\item Identify each spline $\mx{{f}^{(i)}} \in [\Z/m\Z]_{G,\alpha_m}$ with any integer spline $\mx{\bar{f}^{(i)}}$ such that $\mx{\bar{f}^{(i)}}_{v_t} = a$ where $a$ is the smallest coset representative in $\Z$ of $\mx{f^{(i)}}_{v_t}$ for each vertex $v_t$.
\end{enumerate}

Let $\mathbb{B}=\{ \mx{\bar{f}^{(1)}},  \mx{\bar{f}^{(2)}}, \dots ,  \mx{\bar{f}^{(n)}}\}$ denote the set of splines produced by this algorithm. 

\end{algorithm}

\begin{theorem}\label{integer theorem}
Let $G$ be a graph on $|V|=n$ vertices with $\alpha: E \to \{ \text{ ideals of } \Z\}$. Let $\rho_m: \Z \to \Z/m\Z$ be the standard quotient map and let $\alpha_m$ denote the edge-labeling function that sends each edge $uv$ to the ideal $\alpha_m(uv) = \rho_m(\alpha(uv))$. Let $\rho^*_m: \Z_{G,\alpha} \to [\Z/m\Z]_{G,\alpha_m}$ be the induced quotient map on the module of splines. The splines $\mathbb{B}=\{ \mx{\bar{f}^{(1)}},  \mx{\bar{f}^{(2)}}, \dots ,  \mx{\bar{f}^{(n)}}\}$ defined in Algorithm \ref{integer algorithm} form a flow-up basis for $\Z_{G,\alpha}$.
\end{theorem}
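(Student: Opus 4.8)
The plan is to verify the two bullet points of Theorem \ref{min leading elt} (the Bowden--Hagen--King--Reinders criterion) for the set $\mathbb{B}$ produced by Algorithm \ref{integer algorithm}. Since that theorem makes ``forms a flow-up basis'' equivalent to the minimality of the leading entries, it suffices to show (a) each $\mx{\bar{f}^{(i)}}$ is a genuine flow-up spline in $\Z_{G,\alpha}$ with leading entry at $v_i$, and (b) for every flow-up spline $\mx{f^{(i)}} \in \Z_{G,\alpha}$ with leading entry at $v_i$, that entry is an integer multiple of $\mx{\bar{f}^{(i)}}_{v_i}$.

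First I would establish (a). By Algorithm \ref{mod m algorithm} and the results of the previous subsections, $\mathbb{B}_m$ is a flow-up minimum generating set for $[\Z/m\Z]_{G,\alpha_m}$, so each $\mx{f^{(i)}} = x\cdot\mx{b^{(i)}}$ constructed in Step \ref{minimize spline} is a flow-up spline over $\Z/m\Z$ with leading entry at $v_i$ (multiplying by the scalar $x$ does not move the leading position, since $x\cdot\mx{b^{(i)}}_{v_i} = \gcd(\mx{b^{(i)}}_{v_i},m)$ is nonzero in $\Z/m\Z$). The key point here is that Lemma \ref{pull back to beta} — applied exactly with the modulus $m = \operatorname{lcm}(\ell_1,\dots,\ell_t)\cdot p_1$ that was baked into its proof — guarantees that lifting each $\mx{f^{(i)}}$ entrywise to its minimal coset representatives yields a bona fide spline $\mx{\bar{f}^{(i)}} \in \Z_{G,\alpha}$, independent of representative choices. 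Leading zeros are preserved under this lift (a zero coset lifts to $0$), so $\mx{\bar{f}^{(i)}}$ is flow-up with leading entry at $v_i$; and distinct indices $i$ give distinct leading positions because $\mathbb{B}_m$ is flow-up, giving $n$ splines with $n$ distinct leading positions $v_1,\dots,v_n$.

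Next I would establish (b), which is where the choice of $m$ does the real work and which I expect to be the main obstacle. Let $\mx{g}$ be any flow-up spline in $\Z_{G,\alpha}$ with leading entry at $v_i$. Reducing mod $m$ gives a flow-up spline $\rho_m^*(\mx{g}) \in [\Z/m\Z]_{G,\alpha_m}$ with leading entry at $v_i$ (one must check reduction does not create new leading zeros — this is where $m$ being a proper multiple of $\operatorname{lcm}(\ell_i)$, hence larger than is strictly forced, prevents a nonzero integer leading entry from collapsing to $0$ mod $m$ in a way that would break the comparison; more precisely, $m$ is chosen so that $\gcd(\mx{g}_{v_i},m)$ still genuinely records the divisibility of $\mx{g}_{v_i}$). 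Since $\mathbb{B}_m$ generates $[\Z/m\Z]_{G,\alpha_m}$ and is flow-up, expressing $\rho_m^*(\mx{g})$ in terms of $\mathbb{B}_m$ and comparing leading entries shows $\rho_m^*(\mx{g})_{v_i} \in \langle \mx{b^{(i)}}_{v_i}\rangle \subseteq \Z/m\Z$; by Lemma \ref{minimize leading} this cyclic subgroup has smallest positive element $\gcd(\mx{b^{(i)}}_{v_i},m) = \mx{\bar{f}^{(i)}}_{v_i}$, so $\mx{g}_{v_i} \equiv (\text{multiple of } \mx{\bar{f}^{(i)}}_{v_i}) \pmod m$. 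The delicate step is upgrading this congruence mod $m$ to an actual divisibility in $\Z$: here one uses that $\mx{\bar{f}^{(i)}}_{v_i}$ divides $m$ (it is a $\gcd$ with $m$), so the set of integers reducing into $\langle\mx{b^{(i)}}_{v_i}\rangle$ mod $m$ is exactly $\mx{\bar{f}^{(i)}}_{v_i}\cdot\Z$, and $m$ being a \emph{proper} multiple of the lcm of edge labels guarantees no flow-up integer spline can have a leading entry strictly finer than what is detected mod $m$ (this is the role of the extra prime factor $p_1$). Hence $\mx{\bar{f}^{(i)}}_{v_i} \mid \mx{g}_{v_i}$ in $\Z$, verifying the second bullet of Theorem \ref{min leading elt} and completing the proof that $\mathbb{B}$ is a flow-up basis for $\Z_{G,\alpha}$.
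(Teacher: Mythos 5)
Your proposal is correct and follows essentially the same route as the paper: both verify the Bowden--Hagen--King--Reinders criterion of Theorem \ref{min leading elt} by using Lemma \ref{pull back to beta} to lift the mod-$m$ generators to genuine integer splines and Lemma \ref{minimize leading} to certify minimality of the leading entries, with the paper phrasing the minimality step as a contradiction where you argue directly. The only substantive divergence is your account of the extra factor $p_1$ in $m$: the paper uses it to ensure that no edge label vanishes modulo $p_1^{e_1}$, so that $\mathbb{B}_m$ has exactly $n$ elements with $n$ distinct leading positions, whereas your reading (that it prevents loss of divisibility information at the leading entry) is a vaguer gloss on the same point and would be worth tightening.
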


\begin{proof}
To prove that $\mathbb{B}$ is a flow-up basis for $\Z_{G,\alpha}$ we show that there is a flow-up generator for each of $n$ distinct vertices with each generator satisfying the criterion given by Bowden, Hagen, King, and Reinders in Theorem \ref{min leading elt}. 

Gilbert, Polster, and Tymoczko \cite{GilbertPolsterTymoczko} showed that the minimum number of generators for splines over $\Z$ is equal to the number of distinct vertices of $G$. Suppose the modulus $m = lcm( \ell_1, \ell_2, \dots, \ell_t)\cdot p_1$ has primary decomposition $m=p_1^{e_1}, \dots p_r^{e_r}$. If we take edge labels modulo $p_1^{e_1}$ then $G$ has no zero-labeled edges and thus has $n$ zero-connected components. We show in Theorem \ref{rank theorem} in Section \ref{rank sect} that this means Algorithm \ref{mod m algorithm} produces a set of flow-up splines with cardinality $n$. By Lemma \ref{pull back to beta} any preimage in $\Z^{n}$ of a spline in $[\Z/m\Z]_{G,\alpha_m}$ is also a spline in $\Z_{G,\alpha}$ for our particular selection of $m$. Therefore the splines $\{ \mx{\bar{f}^{(1)}},  \mx{\bar{f}^{(2)}}, \dots ,  \mx{\bar{f}^{(n)}}\}$ are splines in $\Z_{G,\alpha}$. 

We show by contradiction that each spline in $\{ \mx{\bar{f}^{(1)}},  \mx{\bar{f}^{(2)}}, \dots ,  \mx{\bar{f}^{(n)}}\}$ has a minimal leading element in $\Z_{G,\alpha}$. Suppose for the sake of contradiction that there is some flow-up spline $\mx{p^{(i)}}$ at $v_i$ in $\Z_{G,\alpha}$ for which $\mx{p^{(i)}}_{v_i} < \mx{\bar{f}^{(i)}}_{v_i}$. By construction we have the following images

\[\rho_m(\mx{p^{(i)}}_{v_i})=a +m\Z \quad \text{  and  } \quad \rho_m(\mx{\bar{f}^{(i)}}_{v_i}) = \mx{f^{(i)}}_{v_i}=b+m\Z\]
where $a<b$. Moreover we know that $\mx{p^{(i)}} \in \langle \mx{b^{(i)}} \rangle$ where $\mx{b^{(i)}}\in\mathbb{B}_m$ the flow-up minimum generating set produced by Algorithm \ref{mod m algorithm}. We showed in Lemma \ref{minimize leading} that $\mx{f^{(i)}}$ has smallest minimal coset representative in $\Z$ of all elements in $\langle \mx{b^{(i)}} \rangle$ so we have reached a contradiction. Therefore $\mx{\bar{f}^{(i)}}$ is a spline in $\Z_{G,\alpha}$ with minimal leading element, so the set $\mathbb{B}$ meets the criterion for a basis given by Bowden, Hagen, King, and Reinder's in Theorem \ref{min leading elt}. This concludes the proof that $\mathbb{B}$ is a basis for $\Z_{G,\alpha}$.
\end{proof}

\begin{remark}
If in application we want to minimize each of entry of a spline in the flow-up basis $\mathbb{B}$, we can achieve this recursively. First we complete Algorithm \ref{integer algorithm} then minimize $\mx{\bar{f}^{(n-1)}}$ by subtracting the largest multiple of $\mx{\bar{f}^{(n)}}$ such that all entries of their difference are non-negative. We continue this process, minimizing $\mx{\bar{f}^{(i-1)}}$ by subtracting the largest multiple of $\mx{\bar{f}^{(i)}}$ from $\mx{\bar{f}^{(i-1)}}$ such that all entries of their difference are non-negative.
\end{remark}

\section{Minimum Number of Generators for Splines mod $m$}\label{rank sect}

In this section we determine the minimum number of generators for splines over $\Z/m\Z$ on a particular graph. Bowden and Tymoczko \cite{BT} showed that depending on the modulus $m$, the minimum number of generators of a ring of splines over $\Z/m\Z$ can vary between $1$ and $|V|$. We sharpen their result to give a quick graph-theoretic test of the minimum number of generators for splines over $\Z/m\Z$ using the work in Section \ref{section algorithm mod m}. 

Using Algorithm \ref{mod p^k algorithm} and the isomorphism in Theorem \ref{primary decomp} we are able to classify the condition under which two vertices have identical labels in all splines over $\Z/m\Z$ on a particular graph. This is a partial converse of Remark \ref{zero cc same}.  

\begin{proposition}\label{same label}
Let $G$ be an edge-labeled graph with edge-labeling function $\alpha: E \to \{ \text{ideals of } \Z/m\Z\}$. Let $m=p_1^{e_1}p_2^{e_2}\dots p_t^{e_t}$ be the primary factorization of $m$. For each $i=1,2, \dots, t$ let $\rho_{e_i} : \Z/m\Z \to \Z/p_i^{e_i}\Z$, and for each $e_i$ let $\alpha_{e_i}$ denote the edge-labeling function that sends each edge $uv$ to the ideal $\alpha_{e_i}(uv) = \rho_{e_i}(\alpha(uv))$. For any spline $\mx{f}$ in $[\Z/m\Z]_{G,\alpha}$, the vertices $v_1$ and $v_2$ must have the same labels $\mx{f}_{v_1} = \mx{f}_{v_2}$ if and only if $v_1$ and $v_2$ are in the same zero-connected component of  $(G,\alpha_{e_i})$ for each $i=1,2,\dots,t$. 
\end{proposition}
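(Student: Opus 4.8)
The plan is to prove both implications. The ``if'' direction is essentially Remark \ref{zero cc same} read through the primary decomposition, while the ``only if'' direction needs a witness spline and is the substantive part. So first I would dispatch the easy direction: suppose $v_1$ and $v_2$ lie in the same zero-connected component of $(G,\alpha_{e_i})$ for every $i$, and fix an arbitrary spline $\mx{f} \in [\Z/m\Z]_{G,\alpha}$. By Theorem \ref{primary decomp}, the coordinatewise reduction $\rho_{e_i}(\mx{f})$ is a spline in $[\Z/p_i^{e_i}\Z]_{G,\alpha_{e_i}}$, so by Remark \ref{zero cc same} all vertices in one zero-connected component of $(G,\alpha_{e_i})$ share a label there; hence $\mx{f}_{v_1} \equiv \mx{f}_{v_2} \pmod{p_i^{e_i}}$. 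Since this holds for every $i$, the Chinese Remainder Theorem gives $\mx{f}_{v_1} \equiv \mx{f}_{v_2} \pmod m$, i.e.\ $\mx{f}_{v_1} = \mx{f}_{v_2}$ in $\Z/m\Z$.

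For the ``only if'' direction I would argue by contrapositive: assume that for some index $j$ the vertices $v_1$ and $v_2$ are \emph{not} in the same zero-connected component of $(G,\alpha_{e_j})$, and produce a spline that separates them. Let $V^{(i,e_j)}$ be the zero-connected component of $(G,\alpha_{e_j})$ containing $v_1$ (with $v_i$ its least-indexed vertex), so that $v_2 \notin V^{(i,e_j)}$, and let $\mx{b^{(i,e_j)}}$ be the spline in $[\Z/p_j^{e_j}\Z]_{G,\alpha_{e_j}}$ furnished by Proposition \ref{new spline}: it equals $p_j^{e_j-1}$ on $V^{(i,e_j)}$ and $0$ off it, so $\mx{b^{(i,e_j)}}_{v_1} = p_j^{e_j-1} \neq 0 = \mx{b^{(i,e_j)}}_{v_2}$ in $\Z/p_j^{e_j}\Z$. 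Now invoke the isomorphism of Theorem \ref{primary decomp} in reverse: there is a spline $\mx{f} \in [\Z/m\Z]_{G,\alpha}$ with $\rho_{e_j}(\mx{f}) = \mx{b^{(i,e_j)}}$ and $\rho_{e_\ell}(\mx{f}) = \mx{0}$ for all $\ell \neq j$. Then $\mx{f}_{v_1}$ and $\mx{f}_{v_2}$ already differ modulo $p_j^{e_j}$, hence $\mx{f}_{v_1} \neq \mx{f}_{v_2}$ in $\Z/m\Z$, contradicting the assumption that the two vertices agree in every spline.

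The routine ingredients are the two uses of Theorem \ref{primary decomp} — decomposing a given spline in the easy direction and assembling one prescribed componentwise in the hard direction — together with the elementary CRT step. The one genuine input is the existence of a spline over the prime-power ring $\Z/p_j^{e_j}\Z$ that distinguishes $v_1$ from $v_2$, and I expect that to be the crux; it is handled cleanly by Proposition \ref{new spline} applied to the zero-connected component containing $v_1$. The only subtlety worth flagging explicitly is that ``$v_1$ and $v_2$ agree in every spline'' is a statement about the whole module, so one must note that the isomorphism of Theorem \ref{primary decomp} is compatible with evaluation at a vertex — which it is by construction, since it is given by $(\rho^*(\mx{p}))_v = \bigoplus_{i=1}^t \rho_{e_i}(\mx{p}_v)$.
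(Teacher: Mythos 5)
Your proof is correct and follows essentially the same route as the paper: the forward direction via Remark \ref{zero cc same} and the Chinese Remainder Theorem, and the contrapositive of the converse via the separating spline of Proposition \ref{new spline} pulled back through the isomorphism of Theorem \ref{primary decomp}. No gaps; your explicit remark that the isomorphism is compatible with evaluation at a vertex is a nice touch the paper leaves implicit.
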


\begin{proof}
First assume $v_1$ and $v_2$ are in the same zero-connected component in $(G,\alpha_{e_i})$ for each $i=1,2,\dots,t$. By Remark \ref{zero cc same} each spline agrees on $v_1$ and $v_2$ for each $i=1,2,\dots,t$.  By the Chinese Remainder Theorem each spline modulo $m$ must also agree on $v_1$ and $v_2$.

For the other direction we give a proof of the contrapositive.  Assume $v_1$ and $v_2$ are in different zero-connected components in $(G,\alpha_{e_j})$ for some $j$ with $1\leq j\leq t$.  Let $V^{(s,e_j)}$ be the zero-connected component containing $v_1$ in $(G,\alpha_{e_j})$, and let $\mx{b}^{(s,e_j)}$ be the spline defined in Proposition \ref{new spline}. By construction, $\mx{b}^{(s,e_j)}$ differs on $v_1$ and $v_2$. Let $\mx{f}$ be the pull back in $[\Z/m\Z]^{|V|}$ of zero splines in $[\Z/p_i^{e_i}\Z]_{G,\alpha_{e_i}}$ for $i\neq j$ and $\mx{b}^{(s,e_j)}$ in $[\Z/p_j^{e_j}\Z]_{G,\alpha_{e_j}}$. Bowden and Tymoczko's isomorphism \cite{BT} shows that $\mx{f}$ is a spline in $[\Z/m\Z]_{G,\alpha}$ which by construction differs on $v_1$ and $v_2$. This proves the claim.
\end{proof}

The following theorem has two parts. The first part gives the minimum number of generators for a ring of splines over $\Z/p^k\Z$. The second part gives the minimum number of generators for a ring of splines over $\Z/m\Z$. The proof  follows from the construction of minimum generating sets in Section \ref{section algorithm mod m}.

\begin{theorem}\label{rank theorem}
Let $G$ be graph with edge-labeling function $\alpha: E \to \{\text{ ideals of } \Z/m\Z\,\,\}$. Suppose that $m=p_1^{e_1}p_2^{e_2}\cdots p_t^{e_t}$ is the primary decomposition of $m$. For each $i=1,2, \dots, t$ let $\rho_{e_i} : \Z/m\Z \to \Z/p_i^{e_i}\Z$ be the standard quotient map, and for each $i$ let $\alpha_{e_i}$ denote the edge-labeling function that sends the edge $uv$ to the ideal $\alpha_{e_i}(uv) = \rho_{e_i}(\alpha(uv))$. Then 

\begin{enumerate}


\item\label{rank p^k} The minimum number of generators for $[\Z/p_i^{e_i}\Z]_{G,\alpha_i}$ is equal to the number of zero-connected components of $(G,\alpha_{e_i})$. 

\item\label{rank m} The minimum number of generators for $[\Z/m\Z]_{G,\alpha}$ is equal to the maximum of the minimum number of generators for $[\Z/p_i^{e_i}\Z]_{G,\alpha_{e_i}}$ over all $i=1,2,\dots, t$.
\end{enumerate}
\end{theorem}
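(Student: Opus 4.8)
The plan is to prove the two parts in sequence, leaning on the algorithmic constructions from Section \ref{section algorithm mod m}. For part \eqref{rank p^k}, I would observe that by Theorem \ref{mod p^k theorem} the set $\mathbb{B}_{p_i^{e_i}}$ produced by Algorithm \ref{mod p^k algorithm} is a \emph{minimum} generating set, so the minimum number of generators for $[\Z/p_i^{e_i}\Z]_{G,\alpha_i}$ equals $|\mathbb{B}_{p_i^{e_i}}|$. It then remains to count the elements of $\mathbb{B}_{p_i^{e_i}}$. I would argue by induction on $\beta$, using the recursive definition \eqref{p^beta mgs}: the base case $\beta=1$ is Theorem \ref{mod p algorithm}, where $\mathbb{B}_p$ has exactly one element per zero-connected component of $(G,\alpha_1)$. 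For the inductive step, the identification $\overline{\mathbb{B}}_{p^{\beta-1}}$ contributes one generator for each zero-connected component indexed by $I_{\beta-1}$, and we adjoin exactly one new spline $\mx{b^{(i,\beta)}}$ for each zero-connected component of $(G,\alpha_\beta)$ whose index $i$ is \emph{not} in $I_{\beta-1}$. Since every zero-connected component of $(G,\alpha_\beta)$ has an index that is either in $I_{\beta-1}$ or not, these two families are in bijection with the full set $I_\beta$ of zero-connected components of $(G,\alpha_\beta)$; hence $|\mathbb{B}_{p^\beta}| = |I_\beta|$, and taking $\beta = e_i$ gives the claim. (One small point to verify carefully is that each index in $I_{\beta-1}$ really does remain an index of a zero-connected component of $(G,\alpha_\beta)$ — i.e., reducing the modulus only merges or preserves zero-connected components, never splits them — so that the count $|I_{\beta-1}| + |I_\beta \setminus I_{\beta-1}| = |I_\beta|$ is valid.)

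For part \eqref{rank m}, I would invoke the isomorphism $[\Z/m\Z]_{G,\alpha} \cong \bigoplus_{i=1}^t [\Z/p_i^{e_i}\Z]_{G,\alpha_{e_i}}$ of Theorem \ref{primary decomp}. The minimum number of generators of a direct sum of modules over $\Z/m\Z$ is controlled by the Chinese Remainder Theorem: since $\gcd(p_i^{e_i},p_j^{e_j})=1$ for $i\neq j$, any finite family of generating tuples — one tuple per summand — can be ``packaged'' into generators of the direct sum whose $i$th coordinate runs through the chosen generators of $[\Z/p_i^{e_i}\Z]_{G,\alpha_{e_i}}$ (padding the shorter lists with $\mx 0$), exactly as in Step \eqref{project splines} of Algorithm \ref{mod m algorithm}. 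This shows a generating set of size $\max_i |\mathbb{B}_{p_i^{e_i}}|$ exists, so the minimum number of generators is at most that maximum. Conversely, applying the surjection $\rho^*_{e_i}$ (the projection to the $i$th summand), any generating set of $[\Z/m\Z]_{G,\alpha}$ maps onto a generating set of $[\Z/p_i^{e_i}\Z]_{G,\alpha_{e_i}}$, which therefore needs at least $|\mathbb{B}_{p_i^{e_i}}|$ elements; so the minimum number of generators is $\geq |\mathbb{B}_{p_i^{e_i}}|$ for each $i$, hence $\geq \max_i |\mathbb{B}_{p_i^{e_i}}|$. Combining the two inequalities with part \eqref{rank p^k} gives the result.

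I expect the main obstacle to be the lower-bound direction of part \eqref{rank m} — specifically, making precise that a minimal generating set of a direct sum of coprime cyclic-type modules cannot be smaller than the largest minimal generating set of a factor. Over a general ring this would be delicate, but here it follows cleanly because each projection $\rho^*_{e_i}$ is a surjective module homomorphism, and surjections cannot increase the minimum number of generators; the only care needed is to confirm that ``minimum number of generators'' is the invariant being tracked on both sides (rather than, say, rank or minimal free rank, which differ in the presence of torsion — the Definition of minimum generating set in the excerpt is exactly the right notion). A secondary, more bookkeeping-type obstacle is verifying in part \eqref{rank p^k} that zero-connected components behave monotonically under reduction of the modulus, which is implicit in Remark \ref{edge btwn zero cc} and the setup of Algorithm \ref{mod p^k algorithm} but should be stated explicitly so the inductive count is airtight.
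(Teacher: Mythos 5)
Your proposal is correct and follows essentially the same route as the paper: part (1) comes from counting the splines produced by Algorithm \ref{mod p^k algorithm} (one per zero-connected component) together with minimality from Theorem \ref{mod p^k theorem}, and part (2) comes from the decomposition in Theorem \ref{primary decomp} and the packaging of generators in Algorithm \ref{mod m algorithm}. You are simply more explicit than the paper on two points it leaves implicit, namely that $I_{\beta-1}\subseteq I_\beta$ so the inductive count closes, and the surjection argument giving the lower bound $\geq\max_i|\mathbb{B}_{p_i^{e_i}}|$; both are correct and worth stating.
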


\begin{proof}
Claim \ref{rank p^k} follows from the construction of $\mathbb{B}_{p_i^{e_i}}$ in Algorithm \ref{mod p^k algorithm}. By construction, each zero-connected component of $(G,\alpha_{e_i})$ indexes exactly one spline in $\mathbb{B}_{p_i^{e_i}}$. The set $\mathbb{B}_{p_i^{e_i}}$ is minimum by Theorem \ref{mod p^k theorem} so the minimum number of generators for $[\Z/p_i^{e_i}\Z]_{G,\alpha_{e_i}}$ is equal to the number of zero-connected components of $(G,\alpha_{e_i})$.

Claim \ref{rank m} follows from Claim \ref{rank p^k} and Bowden and Tymoczko's Theorem \ref{primary decomp}. Every spline in each $\mathbb{B}_{p_i^{e_i}}$ is pulled back to $[\Z/m\Z]_{G,\alpha}$, so the maximum of the minimum number of generators for $[\Z/p_i^{e_i}\Z]_{G,\alpha_i}$ over all $i=1,2,\dots, t$ is equal to the minimum number of generators for splines over $[\Z/m\Z]_{G,\alpha}$.
\end{proof}

In practice we generally start with graphs whose edges are labeled by non-zero ideals, in which case this theorem says that a ring of splines over $\Z/p^k\Z$ has $|V|$ generators.

\section{Multiplication Tables}

In this section we discuss multiplication tables of the minimum generating sets for splines over connected graphs produced by the algorithm determined in Section \ref{section algorithm mod m}. We are motivated by topological constructions of cohomology rings as splines, where we want explicit formulas for multiplication of ring elements. We find explicit formulas for multiplication tables over $\Z/m\Z$ when $m$ is either the product of distinct primes or just a prime power. We do not have explicit formulas for a general $m$ but can use Theorem \ref{primary decomp} to identify an algorithm for multiplication tables.

Multiplication of splines is defined entry-wise. For instance the product of two splines $\mx{f}$ and $\mx{p}$ is defined for each entry $v_i$ as

\[\mx{g}_{v_i} =  \mx{f}_{v_i}\cdot\mx{p}_{v_i}\]

The entry-wise multiplication of splines is computed in the base ring $R$. 

\subsection{Multiplication of splines modulo a product of distinct primes}

We now give multiplication tables for splines over the integers modulo a product of distinct primes.  

\begin{definition}
The support of a spline is the collection of vertices with non-zero entries. We denote the support of a spline $\mathbf{f}$ as supp$(\mathbf{f})$.
\end{definition}

The next theorem treats multiplication of splines in the minimum generating set given by Algorithm \ref{mod m algorithm}. We show that each of these splines is idempotent (and thus also a zero divisor) in the ring of splines over $\Z/m\Z$. 

\begin{theorem}\label{distinct primes}
Let $G$ be a graph with edge-labeling function $\alpha: E \to \{\text{ideals in  } \Z/p_1\dots p_s\Z\}$ where $\{p_1,p_2,\dots, p_s\}$ are distinct primes. Let $\mathbb{B}_{p_1\dots p_s}$ be the minimum generating set of $[\Z/p_1\dots p_s\Z]_{G,\alpha}$ from Algorithm \ref{mod m algorithm}. Then the multiplication table of $[\Z/p_1\dots p_s\Z]_{G,\alpha}$ is

\[\mx{b^{(i)}}\mx{b^{(j)}}=
\begin{cases}
\mx{0} & \text{if  } i\neq j\\
\mx{b^{(i)}} & \text{if  } i= j\\
\end{cases}
\]
\end{theorem}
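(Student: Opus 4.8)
The plan is to leverage Theorem \ref{primary decomp} to transport the computation into the product ring $\bigoplus_{i=1}^{s}[\Z/p_i\Z]_{G,\alpha_{e_i}}$, where multiplication is computed componentwise, and then use the explicit description of the minimum generating set from Algorithm \ref{mod m algorithm} together with the componentwise descriptions coming from Theorem \ref{mod p algorithm}. Concretely, when $m=p_1\cdots p_s$ each $e_i=1$, so each factor $[\Z/p_i\Z]_{G,\alpha_{e_i}}$ is a ring of splines over a field, and by Theorem \ref{mod p algorithm} its minimum generating set $\mathbb{B}_{p_i}$ consists precisely of the indicator splines $\mx{b^{(\ell,1)}}$ taking value $1$ on a zero-connected component of $(G,\alpha_{e_i})$ and $0$ elsewhere. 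Each such indicator spline is plainly idempotent, and two distinct indicator splines in the same $\mathbb{B}_{p_i}$ have disjoint supports (distinct zero-connected components are disjoint) hence multiply to $\mx 0$ in that factor.

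Next I would unwind how Algorithm \ref{mod m algorithm} builds an element $\mx{b^{(k)}} \in \mathbb{B}_{p_1\cdots p_s}$: it is the unique spline whose image under $\rho_{e_i}$ is a prescribed element $\mx{b}_i \in \mathbb{B}_{p_i}\cup\{\mx 0\}$ for each $i$, with each non-zero element of each $\mathbb{B}_{p_i}$ used exactly once across the whole list. Under the isomorphism of Theorem \ref{primary decomp}, $\mx{b^{(k)}}$ corresponds to the tuple $(\mx{b}_1,\dots,\mx{b}_s)$, each $\mx{b}_i$ either an indicator spline or zero. The product $\mx{b^{(k)}}\mx{b^{(k)}}$ then corresponds to the tuple $(\mx{b}_1^2,\dots,\mx{b}_s^2)=(\mx{b}_1,\dots,\mx{b}_s)$ by idempotency in each factor, so $\mx{b^{(k)}}\mx{b^{(k)}}=\mx{b^{(k)}}$. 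For $k\neq k'$, in each coordinate $i$ the two chosen elements $\mx{b}_i$ and $\mx{b}_i'$ are \emph{different} elements of $\mathbb{B}_{p_i}\cup\{\mx 0\}$ (because the algorithm uses each non-zero generator of $\mathbb{B}_{p_i}$ at most once, and $\mx 0$ may repeat but $\mx 0 \cdot \mathbf{anything}=\mx 0$); two distinct non-zero indicator splines multiply to $\mx 0$, and a product involving $\mx 0$ is $\mx 0$. Hence every coordinate of $\mx{b^{(k)}}\mx{b^{(k')}}$ is $\mx 0$, so $\mx{b^{(k)}}\mx{b^{(k')}}=\mx 0$. Finally one checks the isomorphism $\rho^*$ of Theorem \ref{primary decomp} is a ring isomorphism (it is, being induced coordinatewise by ring homomorphisms), so these identities pull back to the claimed multiplication table in $[\Z/p_1\cdots p_s\Z]_{G,\alpha}$.

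The main obstacle I anticipate is bookkeeping the ``used exactly once'' condition carefully enough to be sure that for $k\neq k'$ the two tuples really do differ in every coordinate in a way that forces a zero product. The subtle point is that two distinct elements $\mx{b^{(k)}}, \mx{b^{(k')}}$ of $\mathbb{B}_{p_1\cdots p_s}$ might, in some coordinate $i$, both be assigned indicator splines on \emph{different} zero-connected components (giving disjoint support, product $\mx 0$) — or one of them might be assigned $\mx 0$ in that coordinate — but they can never both be assigned the \emph{same} non-zero indicator spline, precisely because Step \ref{project splines} of Algorithm \ref{mod m algorithm} forbids reusing a non-zero generator. I would make this explicit: in coordinate $i$, if both assigned splines are non-zero they are distinct indicators hence have disjoint supports; otherwise at least one is $\mx 0$; either way the coordinatewise product vanishes. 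Once that is spelled out, the rest is the short idempotency/disjoint-support observation in the field case plus transport along the ring isomorphism of Theorem \ref{primary decomp}, and there is no hard computation remaining.
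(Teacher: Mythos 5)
Your proof is correct and rests on the same two ingredients as the paper's own argument: the mod-$p$ observation that the generators of $\mathbb{B}_{p_i}$ are idempotent indicator splines supported on pairwise disjoint zero-connected components, and the primary-decomposition isomorphism of Theorem \ref{primary decomp}. The only difference is organizational --- the paper peels off one prime at a time by induction and invokes the Chinese Remainder Theorem at each step, whereas you apply the full direct-sum decomposition once and argue coordinatewise; your explicit handling of the ``used at most once'' bookkeeping (in each coordinate either at least one factor is $\mx{0}$ or the two factors are distinct indicators with disjoint supports) is precisely the point the paper's induction also depends on, so nothing is missing.
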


\begin{proof}
We prove the claim by induction. Recall the image modulo $p_r$ for $r=1,\dots, s$ of each $\mx{b^{(i)}}$ is determined by a unique zero-connected component as given by Proposition \ref{new spline}. 

Our base case is for splines over $\Z/p_1\Z$. For $i\neq j$, since $\mx{b^{(i)}}$ and $\mx{b^{(j)}}$ are each non-zero exactly on a distinct zero-connected component, if $\mx{b^{(i)}}_{v_t}\equiv 1 \mod p$ then $\mx{b^{(j)}}_{v_t} \equiv 0 \mod p$ and vice versa. Therefore $\mx{b^{(i)}}_{v_t}\mx{b^{(j)}}_{v_t} \equiv 0 \mod p$ for all vertices $v_t$ so $\mx{b^{(i)}}\mx{b^{(j)}} = \mx{0}$ by definition. The non-zero entries of $ \mx{b^{(i)}}$ are $1$ by construction. Therefore $\mx{b^{(i)}}_{v_t}\mx{b^{(i)}}_{v_t} = \mx{b^{(i)}}_{v_t}$ for all vertices $v_t$ so $\mx{b^{(i)}}\mx{b^{(i)}} = \mx{b^{(i)}}$.

Assume as our induction hypothesis that for all $k\leq s-1$ the minimum generating set $\mathbb{B}_{p_1\dots p_{s-1}}$ has the multiplication table given above. For each $\mx{b^{(i)}}$ in $\mathbb{B}_{p_1, \dots, p_s}$, Algorithm \ref{mod m algorithm} gives us the following for each vertex $v_t$ in the graph.

 \[\mx{b^{(i)}}_{v_t} \equiv \mx{c^{(i)}}_{v_t} \mod p_1\dots p_{s-1} \quad\quad\text{and}\quad\quad \mx{b^{(i)}}_{v_t} \equiv \mx{d^{(i)}}_{v_t} \mod p_s\]

where $\mx{c^{(i)}}$ and $\mx{d^{(i)}}$ are elements of $\mathbb{B}_{p_1, \dots, p_{s-1}}$ and $\mathbb{B}_{p_s}$ respectively. We first show that $\mx{b^{(i)}}\mx{b^{(j)}} = \mx{0}$ when $i\neq j$. For each vertex our base case and induction hypothesis give  

\[\mx{c^{(i)}}_{v_t}\mx{c^{(j)}}_{v_t}=0   \mod p_1\dots p_{s-1}
\quad\quad\text{and}\quad\quad 
 \mx{d^{(i)}}_{v_t}\mx{d^{(j)}}_{v_t} =0 \mod p_{s}
\quad\quad\text{when}\quad i\neq j\]

We conclude that   

\[\mx{b^{(i)}}_{v_t}\mx{b^{(j)}}_{v_t}=
\begin{cases}
0 & \mod p_1\dots p_{s-1}\\
0 & \mod p_s\\
\end{cases}
\]

so by the Chinese Remainder Theorem $\mx{b^{(i)}}_{v_t}\mx{b^{(j)}}_{v_t} \equiv 0 \mod p_1\dots p_s$ for each vertex $v_t$. Therefore all elements $\mx{b^{(i)}} \in\mathbb{B}_{p_1 \dots p_s}$ have multiplicative structure $\mx{b^{(i)}}\mx{b^{(j)}} = \mx{0}$ for $i\neq j$. 

Now we show for each $\mx{b^{(i)}} \in \mathbb{B}_{p_1\dots p_s}$ we have $\mx{b^{(i)}}\mx{b^{(i)}} = \mx{b^{(i)}}$.  In this case the induction hypothesis and base case show 

\[\mx{c^{(i)}}_{v_t}\mx{c^{(i)}}_{v_t}= \mx{c^{(i)}}_{v_t}   \mod p_1\dots p_{s-1}
\quad\quad\text{and}\quad\quad 
 \mx{d^{(i)}}_{v_t}\mx{d^{(i)}}_{v_t} =\mx{d^{(i)}}_{v_t} \mod p_{s}
\quad\quad\text{when}\quad i\neq j\]

We conclude that  

\[\mx{b^{(i)}}_{v_t}\mx{b^{(i)}}_{v_t}=
\begin{cases}
\mx{c^{(i)}}_{v_t} & \mod p_1\dots p_{s-1}\\
\mx{d^{(i)}}_{v_t} & \mod p_s\\
\end{cases}
\]
which again by the Chinese Remainder Theorem gives  $\mx{b^{(i)}}_{v_t}\mx{b^{(i)}}_{v_t} \equiv \mx{b^{(i)}}_{v_t} \mod p_1\dots p_s$ for each vertex $v_t$. Therefore  $\mx{b^{(i)}}\mx{b^{(i)}} = \mx{b^{(i)}}$ for each $\mx{b^{(i)}} \in \mathbb{B}_{p_1\dots p_s}$. Equivalently each of these elements is idempotent in $[\Z/p_1\dots p_s\Z]_{G,\alpha}$. 
\end{proof}

\begin{example}\label{distinct prime example}
Consider the three cycle over $\Z/30\Z$ shown in Figure \ref{mult pqr example}. 

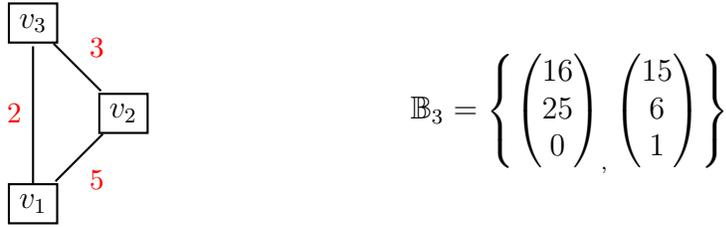
\begin{figure}[h!]
  \begin{minipage}[c]{0.3\textwidth}
  \begin{tikzpicture}[shorten >=1pt,auto,node distance=1.7cm,
  thick,main node/.style={rectangle,draw,font=\sffamily\bfseries}]
  
  \node[main node] (3) {$v_3$};
  \node[main node] (2) [below right of = 3] {$v_2$};
  \node[main node] (1) [below left of = 2] {$v_1$};

  \path[every node/.style={color=black, font=\sffamily\small}]
    
    (3) edge node {\color{red}$3$} (2)
    (1) edge node {\color{red}$2$} (3)
    (2) edge node {\color{red}$5$} (1);

\end{tikzpicture}
\end{minipage}
\quad
\begin{minipage}[c]{0.3\textwidth}
  
$
\mathbb{B}_{3} = \left \{ \begin{pmatrix} 16 \\ 25 \\ 0 \end{pmatrix}_, 
\begin{pmatrix} 15 \\ 6 \\ 1  \end{pmatrix} \right \}$
\end{minipage}
\caption{Edge-Labeled Graph and minimum generating set in $\Z/30\Z$}\label{mult pqr example}
\end{figure}
\vspace{5mm}
The reader can confirm that the spline product $(15\,\, 6\,\, 1)^T(16 \,\,25 \,\,0)^T = \mx{0}$ and that both splines are idempotent modulo $30$. 
\end{example}

\subsection{Multiplication modulo a prime power}

We now build multiplication tables for splines over $\Z/p^k\Z$. The key observation is that Algorithm \ref{mod p^k algorithm} produces minimum generating sets whose elements have either disjoint or nested supports. We prove this in the following lemma.

\begin{lemma}\label{support lemma}
Let $G$ be a graph with edge-labeling function $\alpha: E \to \{\text{ideals of } \Z/p^k\Z\}$ and let $\alpha_i$ denote the edge-labeling function that sends the edge $uv$ to the ideal $\alpha_\beta(uv) = \rho_\beta(\alpha(uv))$ where $\rho_\beta: \Z/p^k\Z \to \Z/p^\beta\Z$ is the standard quotient map. The supports of splines in the minimum generating set $\mathbb{B}_{p^k}$ are either disjoint or one is a proper subset of the other. Moreover suppose the supports of $\mx{b^{(i)}}, \, \mx{b^{(j)}}$ determined by Algorithm \ref{mod p^k algorithm} are not disjoint and that $\mx{b^{(i)}}$ has non-zero entries $p^s$ while $\mx{b^{(j)}}$ has non-zero entries $p^r$ with $s<r$. Then the support of $\mx{b^{(j)}}$ is a proper subset of the support of $\mx{b^{(i)}}$. 
\end{lemma}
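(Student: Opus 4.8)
\emph{Proof plan.} The first move is to translate the statement into one purely about zero-connected components. By Proposition \ref{new spline} and Algorithm \ref{mod p^k algorithm}, every element of $\mathbb{B}_{p^k}$ is (the identification in $\Z/p^k\Z$ of) a spline $\mx{b^{(i,\beta)}}$, whose support is exactly the zero-connected component $V^{(i,\beta)}$ of $(G,\alpha_\beta)$ and all of whose non-zero entries equal $p^{\beta-1}$ (this value is non-zero in $\Z/p^k\Z$ since $\beta-1<\beta\le k$, and the identification preserves minimal coset representatives). Moreover, by the recursion (\ref{p^beta mgs}), the generator $\mx{b^{(i,\beta)}}$ belongs to $\mathbb{B}_{p^k}$ only when $\beta=1$ or $i\notin I_{\beta-1}$. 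So it suffices to understand how the sets $V^{(i,\beta)}$, for various $\beta$, sit inside one another.

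The engine of the argument is a refinement observation. If $\beta\le\gamma$ and an edge $uv$ is zero-labeled modulo $p^\gamma$, then $\alpha(uv)\in p^\gamma\Z\subseteq p^\beta\Z$, so $uv$ is also zero-labeled modulo $p^\beta$; hence every zero path in $(G,\alpha_\gamma)$ is a zero path in $(G,\alpha_\beta)$, so $v\sim_0 w$ in $(G,\alpha_\gamma)$ implies $v\sim_0 w$ in $(G,\alpha_\beta)$. Thus the zero-connected components of $(G,\alpha_\gamma)$ refine those of $(G,\alpha_\beta)$: each $V^{(j,\gamma)}$ lies in a unique component of $(G,\alpha_\beta)$. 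A consequence I will record is $I_{\beta-1}\subseteq I_\beta$, and inductively $I_s\subseteq I_r$ for $s\le r$: the smallest-index vertex of a component of $(G,\alpha_{\beta-1})$ is still the smallest-index vertex of the subcomponent of $(G,\alpha_\beta)$ containing it.

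Then I run a case analysis on two distinct generators, say $\mx{b^{(i,\beta)}}$ (non-zero value $p^{\beta-1}$) and $\mx{b^{(j,\gamma)}}$ (non-zero value $p^{\gamma-1}$) with $\beta\le\gamma$. If $\beta=\gamma$, their supports are two \emph{distinct} zero-connected components of the single graph $(G,\alpha_\beta)$ — distinct because equal support would force $v_i=v_j$ and hence the same spline — so the supports are disjoint. If $\beta<\gamma$, let $C$ be the unique component of $(G,\alpha_\beta)$ containing $V^{(j,\gamma)}$. Either $C\ne V^{(i,\beta)}$, so $C$ and $V^{(i,\beta)}$ are disjoint and hence so are the supports; or $C=V^{(i,\beta)}$, so the support $V^{(j,\gamma)}$ of $\mx{b^{(j,\gamma)}}$ is contained in the support $V^{(i,\beta)}$ of $\mx{b^{(i,\beta)}}$. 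In the last case the inclusion is \emph{proper}: equality would make $v_i=v_j$ the smallest-index vertex of the common set, so $\mx{b^{(j,\gamma)}}=\mx{b^{(i,\gamma)}}$, and since $\gamma\ge 2$ its membership in $\mathbb{B}_{p^k}$ forces $i\notin I_{\gamma-1}$; but $i\in I_\beta\subseteq I_{\gamma-1}$ because $\beta\le\gamma-1$, a contradiction. Since $\beta<\gamma$ is exactly the hypothesis $s<r$ with the larger prime power carrying the smaller support, this also yields the ``moreover'' clause.

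The case analysis is routine bookkeeping; the one genuinely delicate point is the properness of the nested inclusion, which is precisely where the condition ``$i\notin I_{\beta-1}$'' built into Algorithm \ref{mod p^k algorithm} is indispensable — drop it and two generators could share a support. Accordingly I would, before the case analysis, state cleanly both the refinement property and the nesting $I_{\beta-1}\subseteq I_\beta$, together with the immediate fact that a zero-connected component is uniquely recovered from, and uniquely determines, its smallest-index vertex. Everything else then falls out.
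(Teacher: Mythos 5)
Your proposal is correct and follows essentially the same route as the paper: reduce to the refinement of zero-connected components as $\beta$ increases, note that components at a fixed level are disjoint, and read off the containment direction from $s<r$. The one place you go beyond the paper is the properness of the nested inclusion, which the paper simply asserts; your argument via $i\in I_\beta\subseteq I_{\gamma-1}$ contradicting the algorithm's condition $i\notin I_{\gamma-1}$ is exactly the right justification and fills a small gap in the published proof.
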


\begin{proof}
The support of each spline in $\mathbb{B}_{p^k}$ is a zero-connected component of $(G,\alpha_\beta)$ for some $1\leq \beta \leq k$ by construction. Each zero-connected component of $(G,\alpha_\beta)$ is contained within a zero-connected component of $(G,\alpha_{\beta-1})$ for all $\beta > 1$. Further, zero-connected components of $(G,\alpha_\beta)$ for each $\beta$ are disjoint by definition. Therefore the supports of two splines in $\mathbb{B}_{p^k}$ are either disjoint or nested.

Suppose the supports of $\mx{b^{(i)}}, \, \mx{b^{(j)}}$ in $\mathbb{B}_{p^k}$ are not disjoint and that $\mx{b^{(i)}}$ has non-zero entries $p^s$ while $\mx{b^{(j)}}$ has non-zero entries $p^r$ with $s<r$. If the non-zero entries of the $\mx{b^{(i)}}$ are $p^s$ then the support of $\mx{b^{(i)}}$ is the zero-connected component $V^{(i,s+1)}$ by construction in Algorithm \ref{mod p^k algorithm}. Similarly if the non-zero entries of $\mx{b^{(j)}}$ are $p^r$ then the support of $\mx{b^{(j)}}$ is the zero-connected component $V^{(j,r+1)}$. Since by assumption the zero-connected components $V^{(i,s+1)}$ and $V^{(j,r+1)}$ are not disjoint, they must be nested.  Since $s<r$ we conclude $V^{(j,r+1)} \subseteq V^{(i,s+1)}$. Therefore the support of $\mx{b^{(j)}}$ is a proper subset of the support of $\mx{b^{(i)}}$.

\end{proof}

The following theorem presents the multiplication table for splines in the minimum generating set $\mathbb{B}_{p^k}$. We use the characteristics described in Lemma \ref{support lemma} to prove this result. 

\begin{theorem}\label{prime power mult}
Let $G$ be a graph with edge-labeling function $\alpha: E \to \{\text{ideals of } \Z/p^k\Z\}$ and let $\mx{b^{(i)}}$ and $\mx{b^{(j)}}$ be splines in the minimum generating set $\mathbb{B}_{p^k}$ produced by the Algorithm \ref{mod p^k algorithm}. Suppose also that the non-zero entries of $\mx{b^{(i)}}$ are $p^s$ and those of $\mx{b^{(j)}}$ are $p^r$ where $s\leq r<k$. Then the multiplication table of  $[\Z/p^k\Z]_{G,\alpha}$ is

\[
\mx{b^{(i)}}\mx{b^{(j)}} =
\begin{cases}
p^s\mx{b^{(j)}}&  \text{if  }\quad \text{supp}(\mx{b^{(i)}})\bigcap \text{supp}(\mx{b^{(j)}})\neq \emptyset  \text{ and } s+r < k\\ 
\mx{0} & \text{if  } \quad \text{supp}(\mx{b^{(i)}})\bigcap \text{supp}(\mx{b^{(j)}})\neq \emptyset  \text{ and } s+r \geq k\\
\mx{0} & \text{if  } \quad \text{supp}(\mx{b^{(i)}})\bigcap \text{supp}(\mx{b^{(j)}})=\emptyset \\
\end{cases}
\] 
\end{theorem}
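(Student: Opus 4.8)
The plan is to compute the product entrywise, using the explicit description of the generators from Algorithm \ref{mod p^k algorithm} together with the structure of their supports established in Lemma \ref{support lemma}. Recall that $\mx{b^{(i)}}$ is $p^s$ on its support $V^{(i,s+1)}$ and zero elsewhere, and likewise $\mx{b^{(j)}}$ is $p^r$ on $V^{(j,r+1)}$ and zero elsewhere. At a vertex $v_t$ we have $\mx{b^{(i)}}_{v_t}\mx{b^{(j)}}_{v_t} = p^{s+r}$ if $v_t$ lies in both supports and $0$ otherwise; so the whole calculation reduces to understanding $\operatorname{supp}(\mx{b^{(i)}})\cap\operatorname{supp}(\mx{b^{(j)}})$ and interpreting $p^{s+r}$ in $\Z/p^k\Z$.

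First I would handle the disjoint-support case: if $\operatorname{supp}(\mx{b^{(i)}})\cap\operatorname{supp}(\mx{b^{(j)}})=\emptyset$ then every entry of the product is $0$, giving $\mx{b^{(i)}}\mx{b^{(j)}}=\mx{0}$ immediately. Next, suppose the supports are not disjoint. By Lemma \ref{support lemma} they are nested, and since $s\le r$ the support of $\mx{b^{(j)}}$ is contained in the support of $\mx{b^{(i)}}$ (when $s<r$ this is the proper-containment statement of the lemma; when $s=r$ the two zero-connected components coincide, being non-disjoint components of the same graph $(G,\alpha_{s+1})$). Consequently $\operatorname{supp}(\mx{b^{(i)}})\cap\operatorname{supp}(\mx{b^{(j)}}) = \operatorname{supp}(\mx{b^{(j)}})$, so the product is $p^{s+r}$ exactly on $\operatorname{supp}(\mx{b^{(j)}})$ and $0$ off it. In other words, $\mx{b^{(i)}}\mx{b^{(j)}}$ agrees vertex by vertex with $p^s\cdot\mx{b^{(j)}}$, since $p^s\mx{b^{(j)}}$ is $p^s\cdot p^r = p^{s+r}$ on $\operatorname{supp}(\mx{b^{(j)}})$ and $0$ elsewhere. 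Finally I would read off the two subcases from the value of $p^{s+r}$ modulo $p^k$: if $s+r<k$ then $p^{s+r}\ne 0$ in $\Z/p^k\Z$ and we get $p^s\mx{b^{(j)}}$; if $s+r\ge k$ then $p^{s+r}\equiv 0\pmod{p^k}$, so every entry vanishes and the product is $\mx{0}$.

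The only real content beyond bookkeeping is the reduction of $\operatorname{supp}(\mx{b^{(i)}})\cap\operatorname{supp}(\mx{b^{(j)}})$ to $\operatorname{supp}(\mx{b^{(j)}})$, and that is exactly what Lemma \ref{support lemma} supplies; the one small gap to fill carefully is the boundary case $s=r$, where the lemma as stated assumes $s<r$ but the conclusion $\operatorname{supp}(\mx{b^{(i)}})=\operatorname{supp}(\mx{b^{(j)}})$ (hence $\mx{b^{(i)}}=\mx{b^{(j)}}$ by Proposition \ref{new spline}, or rather they are the unique generator attached to that component) still forces nestedness because distinct zero-connected components of a fixed $(G,\alpha_\beta)$ are disjoint. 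I expect this $s=r$ edge case, and checking that $p^s\mx{b^{(j)}}$ is literally the right representative rather than merely ideal-equivalent, to be the main (minor) obstacle; everything else is a direct entrywise verification.
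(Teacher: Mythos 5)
Your proof is correct and follows essentially the same route as the paper's: compute the product entrywise, dispose of the disjoint-support case, use Lemma \ref{support lemma} to identify the intersection of the supports with the support of $\mx{b^{(j)}}$, and then read off the two remaining subcases from the value of $p^{s+r}$ modulo $p^k$. Your explicit handling of the $s=r$ boundary case (where non-disjoint zero-connected components of the same $(G,\alpha_{s+1})$ must coincide) is a small but genuine improvement, since the paper invokes the lemma, which is stated only for $s<r$, without comment at that point.
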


\begin{proof}
If the supports of $\mx{b^{(i)}}$ and $\mx{b^{(j)}}$ are disjoint then entry-wise multiplication of $\mx{b^{(i)}}$ and $\mx{b^{(j)}}$ gives $\mx{b^{(i)}}\mx{b^{(j)}} = \mx{0}$. 

Suppose the supports of $\mx{b^{(i)}}$ and $\mx{b^{(j)}}$ have nonempty intersection. For any vertex $v_t$ at which both $\mx{b^{(i)}}$ and $\mx{b^{(j)}}$ are non-zero we have 

\[ \mx{b^{(i)}}_{v_t}\mx{b^{(j)}}_{v_t} = p^sp^r = p^{s+r}\]

If $s+r \geq k$ this is identically zero so $\mx{b^{(i)}}\mx{b^{(j)}}=\mx{0}$. If $s+r<k$ then by Lemma \ref{support lemma} we know $\text{supp}(\mx{b^{(j)}})$ is a proper subset of $\text{supp}(\mx{b^{(i)}})$, so $\text{supp}(\mx{b^{(j)}}) = \text{supp}(\mx{b^{(i)}}\mx{b^{(j)}})$. Therefore $\mx{b^{(i)}}\mx{b^{(j)}} = p^s\mx{b^{(j)}}$ if $s+r < k$.
\end{proof} 

\begin{example}
Consider the graph over $\Z/8\Z$ shown in Figure \ref{mult p^k example}. 

\begin{figure}[h!]
\centering
  \begin{minipage}[c]{0.3\textwidth}
  \begin{tikzpicture}[shorten >=1pt,auto,node distance=1.7cm,
  thick,main node/.style={rectangle,draw,font=\sffamily\bfseries}]
  
  \node[main node] (4) {$v_4$};
  \node[main node] (3) [right of = 4] {$v_3$};
  \node[main node] (2) [below of = 3] {$v_2$};
  \node[main node] (1) [below of = 4] {$v_1$};

  \path[every node/.style={color=black, font=\sffamily\small}]
    
    (4) edge node {\color{red}$2^2$} (3)
    (1) edge node {\color{red}$2$} (4)
    (3) edge node {\color{red}$2^2$} (2)
    	edge node {\color{red}$2$} (1)
    (2) edge node {\color{red}$2$} (1);

\end{tikzpicture}\end{minipage}
\quad
\centering
\begin{minipage}[c]{0.45\textwidth}
  
$
\mathbb{B}_{\Z/3\Z} = \left \{ \begin{pmatrix} 1 \\ 1 \\ 1\\1 \end{pmatrix}_, 
\begin{pmatrix} 2 \\ 2 \\ 2 \\ 0  \end{pmatrix} _,
\begin{pmatrix} 0 \\ 2^2 \\ 0 \\ 0  \end{pmatrix}_,
\begin{pmatrix} 2^2 \\ 0 \\ 0 \\ 0  \end{pmatrix}\right \}$
\end{minipage}
\caption{Edge-Labeled Graph and minimum generating set in $\Z/8\Z$} \label{mult p^k example}
\end{figure}
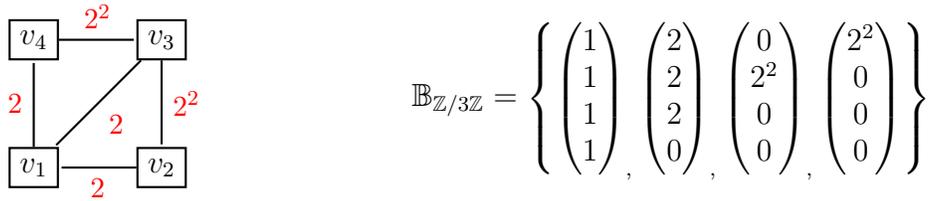
\vspace{5mm}

Multiplying two splines with disjoint supports like $( 0 \,\,0 \,\,2^2\,\, 0)^T$ and $( 0\,\, 0 \,\,0 \,\,2^2 )^T$ yields the zero spline. This also occurs when the supports are not disjoint but the powers of $2$ in the non-zero entries sum to at least $3$ as with $( 0\,\, 0 \,\,,2^2 \,\,0)^T$ and $( 0 \,\,2 \,\,2 \,\,2 )^T$.  When the powers of $2$ in the non-zero entries sum to less than $3$ we get a scalar multiple of one spline. For instance 
\[( 0 \,\,2 \,\,2 \,\,2 )^T( 0 \,\,2 \,\,2 \,\,2 )^T = 2( 0 \,\,2 \,\,2 \,\,2 )^T\]
\end{example}

\subsection{Multiplication modulo $m$}

We do not have an explicit formula for multiplication tables of splines over $\Z/m\Z$ for a general $m$ as we did in Theorem \ref{distinct primes} for splines over $\Z/p_1\dots p_s\Z$ and in Theorem \ref{prime power mult} for splines over $\Z/p^k\Z$. However we can algorithmically determine multiplication tables.
\begin{remark}
Let $G$ be graph with edge-labeling function $\alpha: E \to \{\text{ ideals of } \Z/m\Z\,\,\}$. Suppose that $m=p_1^{e_1}p_2^{e_2}\cdots p_t^{e_t}$ is the primary decomposition of $m$. For each $i=1,2, \dots, t$ let $\rho_{e_i} : \Z/m\Z \to \Z/p_i^{e_i}\Z$ and let $\alpha_{e_i}$ denote the edge-labeling function that sends the edge $uv$ to the ideal $\alpha_{e_i}(uv) = \rho_{e_i}(\alpha(uv))$. The multiplication table of splines in the minimum generating set $\mathbb{B}_m$ produced by Algorithm \ref{mod m algorithm} is determined by the multiplication tables of splines in the minimum generating sets $\mathbb{B}_{p_i^{e_i}}$ produced by Algorithm \ref{mod p^k algorithm}.
\end{remark}


\section{Acknowledgements}
The authors gratefully acknowledge the guidance and support of Julianna Tymoczko as well as useful conversations with Nealy Bowden, Lauren Olson, Megan Perry, and the support of the Smith College Center for Women in Mathematics and NSF Grant DMS--1143716.

\end{document}